\newtheorem{thm}{Theorem}[section]
\newtheorem{cor}[thm]{Corollary}
\newtheorem{cla}[thm]{Claim}
\newtheorem*{thm*}{Theorem}
\theoremstyle{definition}
\theoremstyle{remark}
\newtheorem*{acknowledgement}{Acknowledgments}
\newcommand{\mdc}{\ensuremath{\mathrm{maxdicut}}}
\newcommand{\twr}{\ensuremath{\mathrm{twr}}}
\newcommand{\opt}{\ensuremath{\mathrm{OPT}}}
\newcommand{\cut}{\ensuremath{\mathrm{CUT}}}
\renewcommand{\Pr}{\,\mathbb{P}}
\renewcommand{\le}{\leqslant}
\renewcommand{\leq}{\leqslant}
\renewcommand{\ge}{\geqslant}
\renewcommand{\geq}{\geqslant}
\def\longequation{$$\vcenter\bgroup\advance\hsize by -9em%
\noindent\ignorespaces\refstepcounter{equation}}%
\def\endlongequation{\egroup\eqno(\theequation)$$\global\@ignoretrue}
\begin{document}
\title[Local approximation of the Maximum Cut in regular graphs]{Local approximation of the Maximum Cut\\ in regular graphs}

\author{\'Etienne Bamas} \address{School of Computer and Communication
  Sciences, \'Ecole Polytechnique F\'ed\'erale de Lausanne,
  Switzerland}
\email{etienne.bamas@epfl.ch}

\author{Louis Esperet} \address{Laboratoire G-SCOP (CNRS, Univ. Grenoble Alpes), Grenoble, France}
\email{louis.esperet@grenoble-inp.fr}

\thanks{An extended abstract of this work  appeared in the proceedings
  of the 45th International Workshop on Graph-Theoretic Concepts in Computer Science (WG), 2019.\newline Partially supported by ANR Projects GATO (\textsc{anr-16-ce40-0009-01}) and GrR (\textsc{anr-18-ce40-0032}), and LabEx PERSYVAL-Lab
  (\textsc{anr-11-labx-0025}).}

\date{}
\sloppy

\begin{abstract} 
  This paper is devoted to the distributed complexity of finding an approximation of the maximum cut ({\sc MaxCut}) in graphs. A classical algorithm consists in letting each vertex choose its side of the cut uniformly at random. This does not require any communication and achieves an approximation ratio of at least $\tfrac12$ in expectation. When the graph is $d$-regular and triangle-free, a slightly better approximation ratio can be achieved with a randomized algorithm running in a single round. Here, we investigate the round complexity of \emph{deterministic} distributed algorithms for  {\sc MaxCut} in regular graphs. We first prove that if $G$ is $d$-regular, with $d$ even and fixed, no deterministic algorithm running in a constant number of rounds can achieve a constant approximation ratio. We then give a simple one-round deterministic algorithm achieving an approximation ratio of $\tfrac1{d}$ for $d$-regular graphs when $d$ is odd. We show that this is best possible in several ways, and in particular no deterministic algorithm with approximation ratio $\tfrac1{d}+\epsilon$ (with $\epsilon>0$) can run in a constant number of rounds. We also prove results of a similar flavour for the  {\sc MaxDiCut} problem in regular oriented graphs, where we want to maximize the number of arcs oriented from the left part to the right part of the cut.

  \noindent\textbf{Keywords.} Maximum Cut, Approximation algorithm, Distributed algorithm, Regular graphs.
\end{abstract}
\maketitle

\section{Introduction}

Although the maximum cut problem ({\sc MaxCut}) is fundamental in combinatorial optimization, it has not been intensively studied from the perspective of distributed algorithms. The folklore algorithm consisting in choosing uniformly at random one side of the cut for each vertex of a graph $G$ can however be seen as a distributed randomized algorithm with no rounds of communication. By the linearity of expectation, this algorithm gives a cut (a bipartition of the vertex set) of size at least $m/2$ in expectation, where $m$ is the number of edges of $G$. Here, by the \emph{size} of the cut, we mean the number of edges connecting the two parts of the bipartiton. Since every cut in $G$ contains at most $m$ edges, this algorithm has \emph{approximation ratio} at least $\tfrac12$ in expectation, which means that the size of the cut given by the algorithm is at least $\tfrac12$ of the size of the maximum cut in expectation.

A natural question is whether a better approximation ratio can be obtained if more rounds of communications are allowed. This question was answered positively by Shearer \cite{S92} in the case of triangle-free $d$-regular graphs. A \emph{$d$-regular} graph is a graph in which every vertex has degree $d$. In the case of triangle-free $d$-regular graphs, Shearer gave a simple randomized algorithm finding a cut of size at least $ m\cdot (\frac{1}{2}+\frac{0.177}{\sqrt{d}})$ in expectation, and thus achieving an approximation ratio of $(\frac{1}{2}+\frac{0.177}{\sqrt{d}})$ in expectation. Shearer's algorithm
uses a single round of communication, messages consisting of a single bit, and at most 3 random bits per vertex. This was recently improved by Hirvonen, Rybicki, Schmid and Suomela \cite{H17}, who obtained a simpler algorithm finding a cut of size at least $m\cdot\left(\frac{1}{2}+\frac{0.28125}{\sqrt{d}} \right)$ in expectation. Their algorithm
uses a single round of communication, messages consisting of a single bit, and a single random bit per vertex.


The case where $d$ is small and the \emph{girth} (length of a shortest cycle) is large has also been considered: for 3-regular graphs, Kardo\v{s}, Kr\'al' and Volec \cite{K12} showed that when the girth is at least 637789, there exists a randomized distributed algorithm that outputs a cut of average size at least $0.88672m$ in at most 318894 rounds (the important value here is the size of the cut). This was improved by Lyons~\cite{L17}, who proved a lower bound of $0.89m$ for cubic graphs of girth at least 655. The best known lower bound for cubic graphs of large girth, $0.90m$, was proved by Gamarnik and Li~\cite{GL18}, using a result of Cs\'oka, Gerencs\'er, Harangi, and Vir\'ag~\cite{CGHV15}. The bound of Lyons~\cite{L17} holds for any $d$-regular graphs of large enough (but constant) girth: such graphs have a cut of size at least $ m\cdot (\frac{1}{2}+\frac{2}{\pi\sqrt{d}})\approx m\cdot (\frac{1}{2}+\frac{0.637}{\sqrt{d}})$. On the other hand, Dembo, Montanari and Sen~\cite{DMS17} showed that in random $d$-regular graphs, the maximum cut has size $m\cdot (\frac{1}{2}+\frac{0.763+o(1)}{\sqrt{d}})+o(m)$ with high probability, proving a conjecture of~\cite{GL18}. The existence of this constant $\approx 0.763$ is also connected to a conjecture of Hatami, Lov\'asz and Szegedy~\cite{HLS14} on limits of sparse graphs (see also the conclusion of~\cite{RV17} where the conjecture is strongly disproved for maximum independent sets, improving on an earlier result of~\cite{GS14}).

\medskip

All the results mentioned above (except the result of Gamarnik and Li~\cite{GL18})\footnote{Their algorithm has two phases: The first consists in finding a large bipartite induced subgraph (this can be done in a constant number of rounds), and the second phase greedily assigns each remaining vertex to the side of the bipartition where it has fewer neighbors (it is unlikely that this greedy procedure can be performed in a constant number of rounds, and at least Theorem~\ref{thm:lb} shows that it cannot be performed in a constant number of rounds when there are no restrictions on the girth).} can be translated into algorithms working in the \textsf{CONGEST} model in a constant number of rounds. In this model, each node of the graph corresponds to a processor with infinite computational power and has a unique ID (each ID is an integer between 1 and $\text{poly}(n)$, where $n$ denotes the number of vertices in the graph). Nodes can communicate with their neighbors in the graph in synchronous rounds until each node outputs 0 or 1, corresponding to its side in the cut. In the \textsf{CONGEST} model, each message sent by a node to a neighbor has size $O(\log n)$, while in some of the algorithms above, the messages have size at most 1. Let us call $\textsf{CONGEST}(B)$ the variant of the \textsf{CONGEST} model in which messages are restricted to have size at most $B$ (instead of  $O(\log n)$), and let us say that an algorithm is \emph{local} in a model if it runs in a constant number of rounds in this model. In particular the results of~\cite{H17,K12, S92} mentioned above can be translated into local algorithms in the \textsf{CONGEST($O(1)$)} model, while the results of~\cite{CGHV15,L17} can be translated into local algorithms in the \textsf{CONGEST} model.

Note that some of our lower bounds are also valid in the less restricted \textsf{LOCAL} model where the size of each message is not limited. In the following, we will make it clear if this applies. On the other hand, all our algorithms can be implemented in the \textsf{PO} model (anonymous network with port numbering and orientations), whose assumptions are significantly weaker than the \textsf{CONGEST} model (see~\cite{GHS13} for some results on local algorithms in \textsf{PO} and \textsf{CONGEST}).

\medskip

We now review recent results on distributed approximation of {\sc MaxCut}. On the deterministic side, Censor-Hillel, Levy, and Shachnai \cite{K17} designed a deterministic  $\frac{1}{2}$-approximation that runs in $\tilde{O}\left(\Delta+\log^* n \right)$ rounds in the \textsf{CONGEST} model  on any graph of maximum degree at most $\Delta$. More recently, Kawarabayashi and Schwartzman \cite{S17} improved the complexity for constant factor approximation by providing a deterministic $\left(\frac{1}{2}-\epsilon\right)$-approximation that runs in $O(\log^* n)$ rounds (for any $\epsilon>0$), in the \textsf{CONGEST} model. However, no deterministic \emph{local} approximation for {\sc MaxCut} (i.e.\ running in a constant number of rounds) in the \textsf{CONGEST} model is known.

\medskip

There is a similar gap between randomized and deterministic approximations for the maximum directed cut problem. Censor-Hillel, Levy, and Shachnai \cite{K17} provided a deterministic algorithm running in $O(\Delta+\log^* n)$ rounds that guarantees a $\frac{1}{3}$-approximation as well as a randomized $\frac{1}{2}$-approximation with the same round complexity. The round complexities were improved by Kawarabayashi and Schwartzman \cite{S17} who provided a deterministic $\left(\frac{1}{3}-\epsilon\right)$-approximation running in $O(\log^* n)$ rounds as well as a randomized $\left(\frac{1}{2}-\epsilon \right)$-approximation in $O(\epsilon^{-1})$ rounds.  All these results are stated in the \textsf{CONGEST} model. Similarly, no deterministic local algorithm is known to achieve a constant factor approximation for this problem.

\subsection{Our results} 

Our work focuses on bridging the gap between extremely efficient randomized local algorithms and slower deterministic algorithms for {\sc MaxCut}. It should be noted that there are generic tools to derandomize distributed algorithms (see~\cite{CKP16, GHK17} for recent results in this direction) but existing techniques mainly apply to \emph{locally checkable} problems (problem for which a solution can be checked locally), which is not the case for (approximations of) {\sc MaxCut}.

In Section \ref{sec_up} we show that any  deterministic algorithm that guarantees a constant factor approximation for {\sc MaxCut} on the class of bipartite \textit{d-regular} graphs when $d$ is a (constant) even integer requires $\Omega(\log^*n)$ rounds, which matches the complexity of the algorithm of Kawarabayashi and Schwartzman \cite{S17} mentioned above. When $d$ is odd, we show that one cannot achieve an approximation ratio better than $\frac{1}{d}$ in a constant number of rounds. Our proofs use an elementary graph construction and then apply Ramsey's theorem \cite{F30}. Both of these arguments are not new in distributed algorithms: our construction is inspired by Linial's seminal paper \cite{L92} that provides a lower bound on the round complexity of coloring cycles and by a more recent paper by  {\AA}strand, Polishchuk, Rybicki, Suomela, and Uitto \cite{A10} which applies Ramsey's theorem in a similar setting to prove that there is no deterministic and local constant factor approximation for the maximum matching problem. Note that similar arguments were also used by Czygrinow, Hanckowiak, and Wawrzyniak~\cite{CHW08} to prove lower bounds for the approximation of maximum independent sets in cycles. Our results hold for any $d$-regular graph ($d$ is not necessarily equal to 2), so some additional work needs to be done compared to the simple case of cycles.

In Section \ref{sec:low}, we show that this barrier of $\frac{1}{d}$ when $d$ is odd is sharp: we first remark that a result of Naor and Stockmeyer \cite{N93} on weak 2-coloring of graphs directly gives a deterministic local algorithm that guarantees a $\frac{1}{d}$-approximation. We then provide a much simpler and faster deterministic local algorithm achieving the same approximation ratio. It runs in a single round with messages of size $O(\log n)$ and we show that this cannot be improved.

For the Maximum Directed Cut problem in $d$-regular graphs, we prove that a similar situation occurs. If $d$ is even, a constant factor approximation cannot be achieved in $o(\log^* n)$ rounds, and if $d$ is odd, no $(\tfrac2{d}+\epsilon)$-approximation can be achieved in $o(\log^* n)$ rounds (for any $\epsilon>0$). On the other hand, if $d$ is odd, a $(\tfrac{2}{d+1/d})$-factor approximation can be achieved in 0 rounds, and a $(\tfrac{2}{d+1/d-3/d^2+O(d^{-3})})$-factor approximation can be achieved in 2 rounds. Note that there is a small gap between the lower bounds and the upper bound of $\tfrac{2}{d}$, and we explain some obstacles towards closing the gap.

\medskip

Our results imply that while finding a constant factor approximation for the (directed) maximum cut problem in regular graphs of even degree does not require any communication for randomized distributed algorithm (i.e.\ it can be solved in 0 rounds), for deterministic algorithms  an unbounded number of rounds is needed in this case. Note that this separation is not possible for locally checkable problems (see Theorem 3 in~\cite{CKP16}). The (perhaps) surprising aspect is that in the case of regular graphs of \emph{odd} degree, the problem can be solved by a \emph{deterministic} algorithm without communication (if some orientation is given).

Note that another example of a non locally checkable problem with such a separation between the randomized and deterministic complexities was given in \cite{GHK17}. Their problem consists in marking $(1+o(1))\sqrt{n}$ vertices of an $n$-cycle; the randomized version can also be solved in 0 rounds, while the deterministic version needs $\Omega(\sqrt{n})$ rounds. 

\subsection{Definitions}

A \emph{cut} in a graph $G$ is a bipartition $(A,B)$ of its vertex set $V(G)$. We usually refer to $A$ and $B$ as the left side and the right side of the cut, respectively. The \emph{size} of a cut $(A,B)$ is the number of edges with one end in $A$ and the other in $B$. The {\sc MaxCut} problem in a graph $G$ consists in finding a cut in $G$ whose size is maximum.

Given an oriented graph $G$, a \emph{directed cut} is again a bipartition $(A,B)$ of the vertex set of $G$, and the size of the directed cut $(A,B)$ is the number of arcs with their tail in $A$ and their head in $B$. The {\sc MaxDiCut} problem in an oriented graph $G$ consists in finding a directed cut in $G$ whose size is maximum.

\medskip

Our results in this paper mainly concern $d$-regular graph, i.e.\ graphs in which each vertex has degree $d$. When we refer to an oriented $d$-regular graph $G$, we mean that the underlying unoriented graph is $d$-regular (the out-degrees can be arbitrary).

\medskip

For an integer $k\ge 1$, the \emph{tower function} $\twr_k$ is the function defined as $\twr_1(x)=x$ and $\twr_{k}(x)=2^{\twr_{k-1}(x)}$ for $k\ge 2$. The \emph{iterated logarithm} of an integer $n$, denoted by $\log^*n$ is defined as 0 if $n\le 1$, and as $1+\log^*(\log n)$ otherwise (here and everywhere else in the paper, $\log$ denotes the base 2 logarithm). The following can be easily derived by induction on $k$.
\begin{cla}\label{tower_claim} For any $k,n\ge 1$:
$$\log^* (\twr_{k}(n))= k-1+\log^*(n)$$
\end{cla}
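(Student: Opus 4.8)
The plan is a straightforward induction on $k$, with the variable $n\ge 1$ kept universally quantified throughout. For the base case $k=1$ I would simply unfold the definitions: $\twr_1(n)=n$, so $\log^*(\twr_1(n))=\log^*(n)=1-1+\log^*(n)$, which is exactly the claimed identity.

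For the inductive step, I would assume the identity holds for $k-1$ and every $n\ge 1$, and consider $\twr_k(n)=2^{\twr_{k-1}(n)}$. The one small preliminary point to verify is that this quantity is strictly larger than $1$, so that the recursive clause $\log^* m = 1+\log^*(\log m)$ in the definition of $\log^*$ is applicable at $m=\twr_k(n)$: a trivial sub-induction gives $\twr_j(n)\ge n\ge 1$ for every $j\ge 1$, hence $\twr_{k-1}(n)\ge 1$ and therefore $\twr_k(n)=2^{\twr_{k-1}(n)}\ge 2>1$. Since $\log$ denotes the base-$2$ logarithm, $\log(\twr_k(n))=\twr_{k-1}(n)$, and combining the definition of $\log^*$ with the induction hypothesis applied to $k-1$ yields
$$\log^*(\twr_k(n))=1+\log^*\bigl(\log(\twr_k(n))\bigr)=1+\log^*(\twr_{k-1}(n))=1+\bigl((k-1)-1+\log^*(n)\bigr)=k-1+\log^*(n),$$
which completes the induction.

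There is no genuine obstacle here: the argument is two lines of definition-chasing. The only thing one must be careful not to omit is the elementary bound $\twr_{k-1}(n)\ge 1$, since without it one is not entitled to peel off a single logarithm in the definition of $\log^*$; everything else is immediate.
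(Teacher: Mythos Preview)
Your proof is correct and follows exactly the approach the paper indicates: the paper does not spell out a proof but simply states that the claim ``can be easily derived by induction on $k$'', and your induction (with the small check that $\twr_{k-1}(n)\ge 1$ so the recursive clause of $\log^*$ applies) is precisely that derivation.
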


\section{Deterministic constant factor approximation in regular graphs}
\label{sec_up}

As mentioned in the introduction of this paper, Kawarabayashi and Schwartzman \cite{S17} provided a deterministic approximation algorithm running in $O(\log^* n)$ rounds for both problems studied here. In this section, we show using simple arguments based on bounds on Ramsey numbers that their bound is best possible.

\medskip

Let $[N]=\left\lbrace 1,\ldots ,N \right\rbrace$. The \emph{$q$-color Ramsey number} $r_k(n;q)$ is the minimum $N$ such that in any $q$-coloring of the $k$-element subsets of $[N]$, there is an $n$-element subset $S$ of $[N]$ such that all $k$-element subsets of $S$ have the same color (see~\cite{Ramsey17} for a recent survey on Ramsey numbers).

\begin{thm}[\cite{ES35,ER52}]\label{thm:ramsey}
 There exists $c>0$ such that for any positive integers $q$, $k$, and $n$, we have $r_k(n;q) \leq \twr_k(c\cdot n \cdot q \log q)$.
\end{thm}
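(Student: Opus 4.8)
The plan is to prove the inequality by induction on the uniformity $k$, using the classical Erd\H{o}s--Rado argument, which reduces a monochromatic-set problem for $k$-uniform colourings to one for $(k-1)$-uniform colourings at the cost of one extra exponentiation. We may assume throughout that $q\ge 2$ (the bound being of interest only in that range), so that $\log q\ge 1$; this slack is exactly what lets the constant $c$ stay independent of $q$. The base case $k=1$ is just pigeonhole: $r_1(n;q)\le(n-1)q+1\le\twr_1(cnq\log q)$ as soon as $c\ge 1$. For the inductive step I will first establish a recursion of the form
\[
  r_k(n;q)\ \le\ 2\,\bigl(r_{k-1}(n;q)+1\bigr)\,q^{\binom{r_{k-1}(n;q)+1}{k-1}},
\]
and then check that the tower function swallows it with a constant $c$ that does not depend on $k$ or $q$.

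To establish the recursion, fix a $q$-colouring $c$ of the $k$-subsets of $[N]$. I would build greedily a sequence $a_1,a_2,\dots$ together with a decreasing family $[N]=S_0\supseteq S_1\supseteq\cdots$ with $a_{i+1}\in S_i$ and $S_{i+1}\subseteq S_i\setminus\{a_{i+1}\}$, maintaining the property that the colour of a $k$-set $\{a_{i_1},\dots,a_{i_{k-1}},x\}$ with $i_1<\cdots<i_{k-1}$ and $x\in S_{i_{k-1}}$ depends only on $(i_1,\dots,i_{k-1})$. This is maintained by a pigeonhole refinement: after $a_1,\dots,a_{i+1}$ have been chosen, the only $(k-1)$-subsets of $\{a_1,\dots,a_{i+1}\}$ not yet handled are the $\binom{i}{k-2}$ sets $\{a_{i+1}\}\cup R'$ with $R'\subseteq\{a_1,\dots,a_i\}$ of size $k-2$; each of these induces a $q$-colouring $x\mapsto c(\{a_{i+1}\}\cup R'\cup\{x\})$ of $S_i\setminus\{a_{i+1}\}$, and taking $S_{i+1}$ to be the largest class of the common refinement of all of them yields $|S_{i+1}|\ge(|S_i|-1)/q^{\binom{i}{k-2}}$. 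By the hockey-stick identity $\sum_{i=0}^{m-1}\binom{i}{k-2}=\binom{m}{k-1}$, unwinding these inequalities shows that $N\ge 2m\,q^{\binom{m}{k-1}}$ suffices to produce $a_1,\dots,a_m$. Finally, $q$-colour the $(k-1)$-subsets of $[m-1]$ by assigning to $\{i_1,\dots,i_{k-1}\}$ the common colour of $\{a_{i_1},\dots,a_{i_{k-1}},a_j\}$ for any $j>i_{k-1}$ (well defined by the property above); if $m-1\ge r_{k-1}(n;q)$, this colouring has a monochromatic $n$-subset $I\subseteq[m-1]$, and one reads off directly that $\{a_i:i\in I\}$ is monochromatic for $c$. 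Taking $m=r_{k-1}(n;q)+1$ gives the recursion.

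It remains to feed the induction hypothesis $r_{k-1}(n;q)\le\twr_{k-1}(X)$ with $X:=cnq\log q$ into this recursion. Writing $T=r_{k-1}(n;q)$ and using $\binom{T+1}{k-1}\le(T+1)^{k-1}$, the recursion gives $r_k(n;q)\le 2(T+1)q^{(T+1)^{k-1}}\le 2^{(\log q)(2T)^{k-1}+\log(2T+2)}$; since $T\le\twr_{k-1}(X)$ is itself an iterated exponential, the right-hand side is at most $\twr_k(X')$ for an argument $X'$ only slightly larger than $X$. The one genuinely technical point — and the main obstacle — is to check that $X'$ can be taken equal to $X$ once $c$ is fixed, uniformly in $k$ and $q$. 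The level-$k$ overhead involves only the factors $2^{k-1}$ and $\log q$ sitting inside a single exponentiation; tracing it through all levels, the argument of the tower changes by a single large step going from $k=2$ to $k=3$ (roughly a multiplicative factor $2$ plus an additive $O(\log\log q)$, since there the exponent $k-1$ multiplies the top argument), then an $O(1)$ step from $k=3$ to $k=4$, and then steps of size $O(\log k)/2^{\twr_{k-4}(X)}$, which are summable to $O(1)$ precisely because the tower function outgrows any logarithmic correction (so even astronomically large $k$ causes no trouble). Since $X=cnq\log q\ge c\log q\ge c\log\log q$ for $q\ge 2$, all of this is dominated by enlarging $c$ by an absolute constant, which closes the induction.
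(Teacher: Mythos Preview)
The paper does not prove Theorem~\ref{thm:ramsey}; it is quoted from the classical papers of Erd\H{o}s--Szekeres and Erd\H{o}s--Rado and used only as a black box in the proof of Theorem~\ref{thm:lb} (where all that matters is that $\log^* r_k(n;q)=O(k+\log^*(nq\log q))$). So there is no ``paper's own proof'' to compare against.

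That said, your argument is exactly the standard Erd\H{o}s--Rado stepping-up proof, and the recursion you derive,
\[
  r_k(n;q)\le 2\bigl(r_{k-1}(n;q)+1\bigr)\,q^{\binom{r_{k-1}(n;q)+1}{k-1}},
\]
is correct, as is the greedy construction and the reduction to a $(k-1)$-uniform colouring. The only genuinely delicate point is the one you flag: closing the induction with a constant $c$ independent of $k$ and $q$. A naive attempt that merely plugs $T\le\twr_{k-1}(X)$ into the recursion does \emph{not} close (for $k=3$ one gets $\log r_3\le(\log q)T^2$ with $T\le 2^X$, which is $\le 2^{2X+\log\log q}$, not $2^X$). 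What saves the day, as your heuristic correctly identifies, is tracking a sequence $X_k$ with $r_k(n;q)\le\twr_k(X_k)$: one finds $X_3\approx 2X_2+O(\log\log q)$, then $X_4-X_3=O(1)$, and thereafter $X_k-X_{k-1}=O(\log k)/\twr_{k-3}(X)$, which is summable. Your outline of this telescoping is accurate (possibly off by one in the tower subscript), though a fully rigorous version would spell out the sequence $X_k$ explicitly. One minor quibble: the statement as written allows $q=1$, where $q\log q=0$ and the bound degenerates; your restriction to $q\ge 2$ is the intended regime.
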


We will also need two simple constructions of $d$-regular bipartite graphs.

\smallskip

First we assume that $d$ is even. We consider a cycle $C$ of size $n\ge 2d$, with $n$ even, and then add an edge between each pair of vertices that are at distance exactly $i$ in $C$ for every $i\in \left\lbrace 3,5,7, \ldots , d-1\right\rbrace$. This graph, which we denote by $C_n^d$, is certainly bipartite (the bipartition corresponds to the vertices at even distance from some arbitrary vertex in $C$, and the vertices at odd distance from this vertex). See figure \ref{cycle} for an example of this graph. By a slight abuse of notation, we say that two (or more) vertices of $C_n^d$ are \emph{consecutive} if they are consecutive in $C$. Similarly, when we refer to the \emph{clockwise order} around $C_n^d$, we indeed refer to the clockwise order around $C$.

\medskip

Assume now that $d$ is odd. We take two disjoint copies of $C_n^{d-1}$ and assume that the vertices of the cycle $C$ in the first copy are $u_1,u_2,\ldots,u_n$, in clockwise order, and the vertices of the cycle $C$ in the second copy are $v_1,v_2,\ldots,v_n$ in clockwise order. We then connect $u_i$ and $v_i$ by an edge, for any $1\le i \le n$ (and we say it what follows that $u_i$ and $v_i$ are \emph{matched}).
This graph, which we denote by $D_{2n}^d$, is clearly bipartite and $d$-regular, see Figure \ref{cycle_odd} for an example.

\begin{figure}[h!]
\centering
\begin{minipage}{0.5\textwidth}
\centering
\includegraphics[scale=0.4]{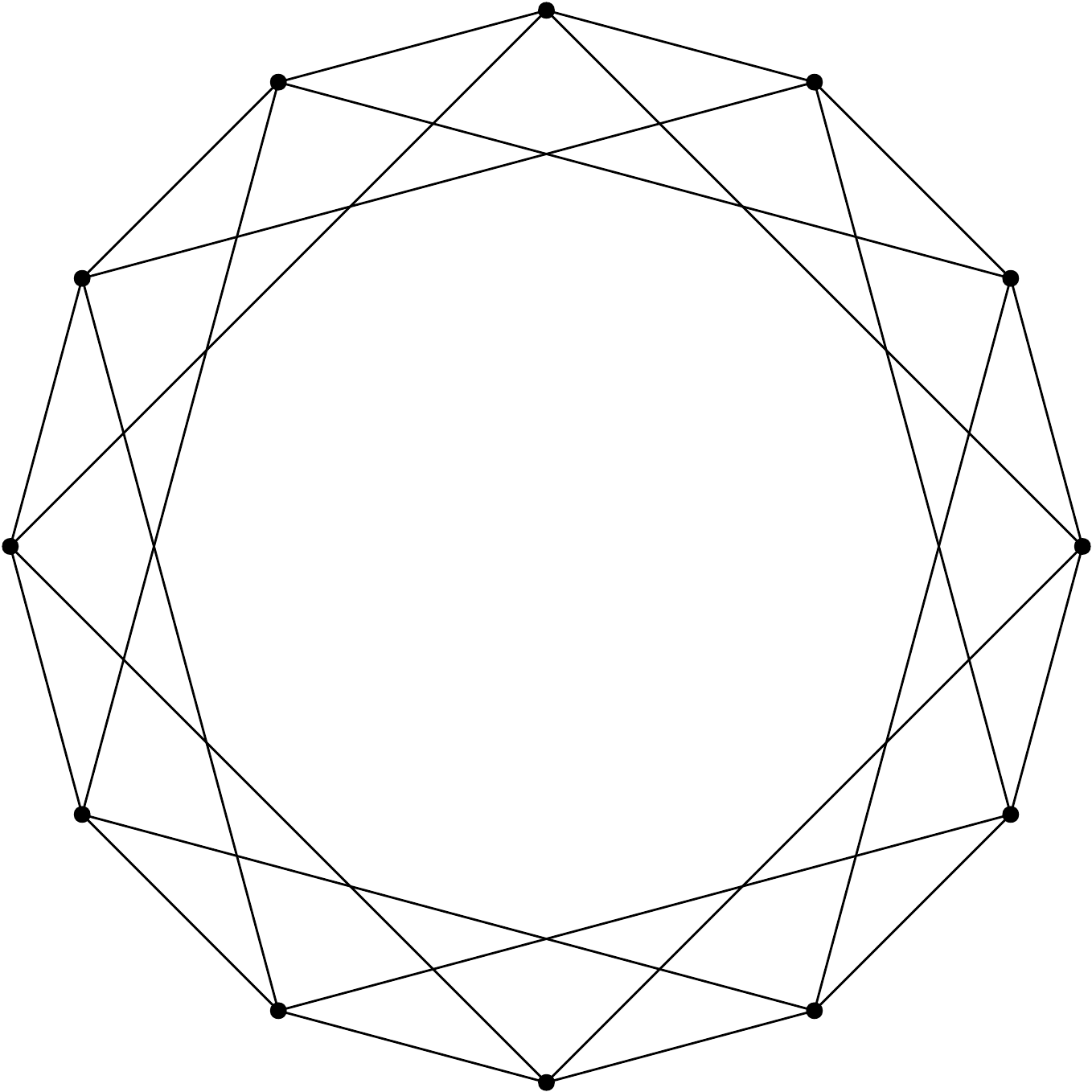}
\caption{$C_{12}^4$}
\label{cycle}
\end{minipage}%
\begin{minipage}{0.5\textwidth}
\centering
\includegraphics[scale=0.4]{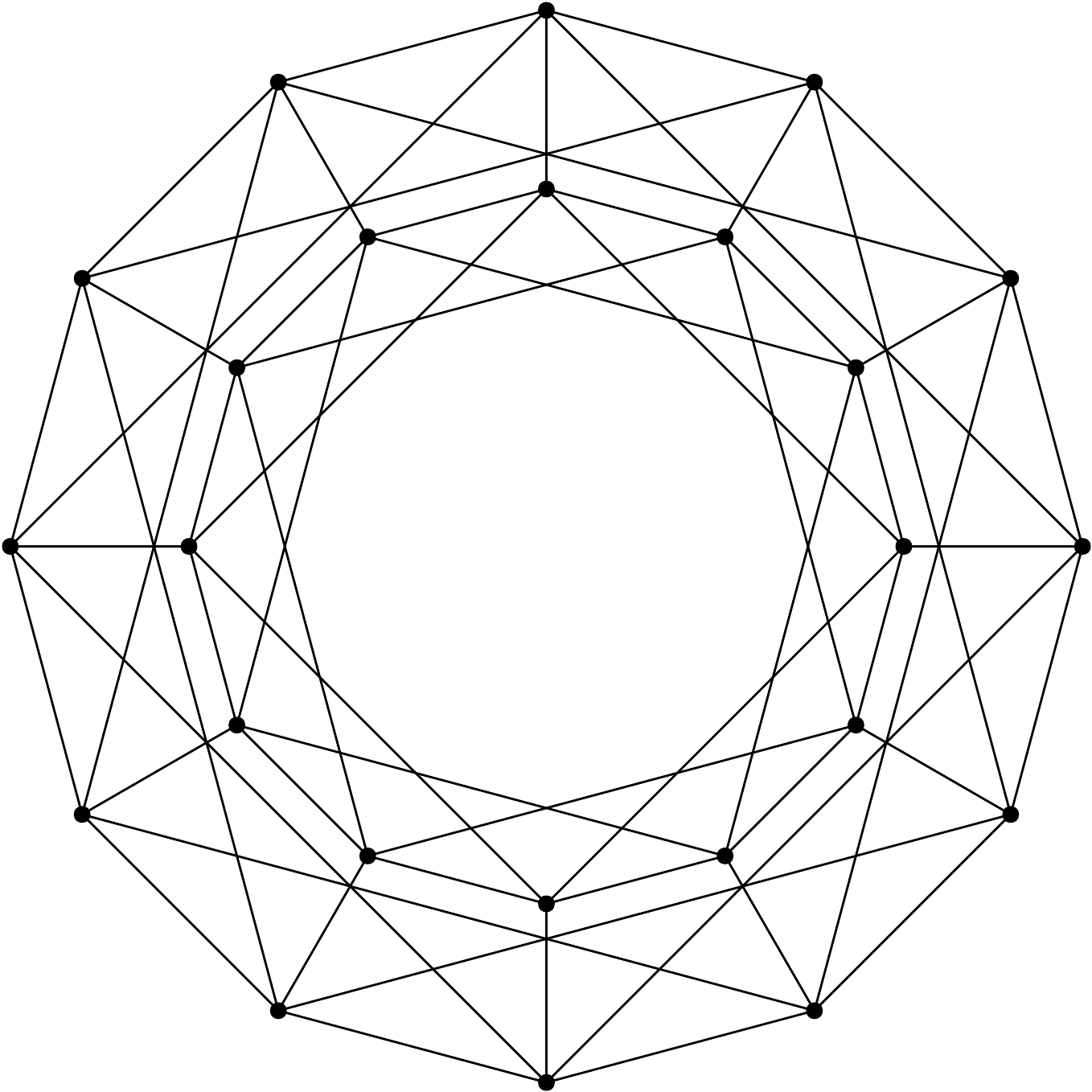}
\caption{$D_{24}^{5}$}
\label{cycle_odd}
\end{minipage}
\end{figure}

We are now ready to state the main result of this section. 


\begin{thm}\label{thm:lb}
Let $d\ge 2$ be a fixed integer.
\begin{itemize}
\item If $d$ is even, then any deterministic algorithm in the \textsf{LOCAL} model that guarantees a constant factor approximation for {\sc MaxCut} on the class of bipartite $d$-regular $n$-vertex graphs runs in $\Omega(\log^*n)$ rounds.
\item If $d$ is odd, then for any $\epsilon>0$, any deterministic $\left(\frac{1}{d}+\epsilon \right)$-approximation algorithm  in the \textsf{LOCAL} model for {\sc MaxCut} on the class of bipartite $d$-regular $n$-vertex graphs runs in $\Omega(\log^*n)$ rounds.
\end{itemize}
\end{thm}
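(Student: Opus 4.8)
The plan is to use a standard "indistinguishability plus Ramsey" argument in the style of Linial and of {\AA}strand et al. Suppose a deterministic algorithm $\mathcal{A}$ runs in $t$ rounds. In the \textsf{LOCAL} model, the output of each vertex $v$ depends only on the isomorphism type of its radius-$t$ ball together with the IDs appearing in that ball. I would work with the graph $C_n^d$ (for $d$ even) or $D_{2n}^d$ (for $d$ odd), and assign the IDs to vertices in increasing order along the cycle $C$ (or along both cycles in the odd case). The crucial observation is that the radius-$t$ ball around a vertex $u_i$ that is "far" from the endpoints, i.e. sufficiently far from vertex $1$ and vertex $n$ along $C$, has a fixed isomorphism type independent of $i$: it is determined entirely by the relative structure of the construction, which is translation-invariant. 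Hence the output of $\mathcal{A}$ at $u_i$ is a function only of the $2t+1$ (or $O(dt)$, since the extra chords compress distances — here I should be careful to track the exact radius in terms of $d$ and $t$, but it is still a constant) consecutive IDs surrounding $u_i$ in the cycle order.

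Next I would apply Ramsey's theorem (Theorem~\ref{thm:ramsey}). Color each $k$-element subset of $[N]$, where $k=2t+1$ scaled up by the appropriate constant depending on $d$, by the pattern of outputs that $\mathcal{A}$ produces on a window of vertices receiving those IDs in increasing order (the number of colors $q$ is bounded: the output pattern on a window of constant size is one of at most $2^{O(dt)}$ possibilities). If $N=r_k(n;q)$, then there is an $n$-element monochromatic set $S\subseteq[N]$. Building the graph $C_n^d$ (resp. $D_{2n}^d$) on the vertex set with IDs drawn from $S$ in increasing order, every internal vertex sees a window whose IDs form a $k$-subset of $S$, hence all these windows get the same color — meaning all internal vertices produce outputs according to one fixed periodic pattern. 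Since $t$ is constant and $N=\twr_{k}(O(n q\log q))$ is a tower of constant height in $n$, having $t=o(\log^*n)$ forces (via Claim~\ref{tower_claim}) the construction to still fit inside $[\mathrm{poly}(n')]$ where $n'$ is the number of vertices of the final graph, so this is a legal ID assignment.

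Finally I would derive the contradiction by analyzing what cut such a "periodic" output can produce. In $C_n^d$ (d even, bipartite), if the output pattern along the cycle is eventually periodic with some period $p$, then a short case analysis shows the number of cut edges contributed per period is some constant $c_p < $ the number of edges per period; more importantly, for the particular construction one can show that a periodic labelling cuts only a vanishing fraction — indeed, the key point exploited in these constructions is that on a cycle (or cycle-like graph) any periodic two-coloring either has a long monochromatic run, in which case all edges inside that run are uncut, or it alternates, in which case the odd chords (distances $3,5,\dots,d-1$) are monochromatic and uncut; either way at least a constant fraction is lost, and by taking $n$ large one shows the cut size is $o(m)$ while the optimum is $m$ (the graph is bipartite), contradicting the constant-factor guarantee. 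For $d$ odd, $D_{2n}^d$ is bipartite so $\mathrm{OPT}=m$, but a periodic output across both cycles — coupled with the matching edges $u_iv_i$ — can be shown to cut at most roughly $m/d$ edges asymptotically, because the $D_{2n}^d$ structure forces that improving beyond the "trivial" bound requires breaking periodicity; this yields the $\tfrac1d+\epsilon$ lower bound. I expect the \textbf{main obstacle} to be this last combinatorial step: precisely quantifying, for an arbitrary periodic (or eventually-periodic) vertex labelling of $C_n^d$ and $D_{2n}^d$, the best possible cut fraction, and in particular pinning down the exact constant $\tfrac1d$ for the odd case rather than merely "some constant less than $1$." The indistinguishability and Ramsey parts are routine; the extremal analysis of periodic cuts in these specific gadgets is where the real work lies.
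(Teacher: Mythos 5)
Your high-level framework (ID-window indistinguishability on $C_n^d$ and $D_{2n}^d$ plus Ramsey, as in Linial and {\AA}strand et al.) is the same as the paper's, but the proposal has two genuine gaps, and they sit exactly where you flag your ``main obstacle.'' First, the conclusion you draw from monochromaticity is too weak and then your proposed rescue of it is wrong. If one colors each $r$-subset by the output of the \emph{central} vertex of the window (this is the well-defined choice; the outputs of off-center vertices depend on IDs outside the window, so your ``pattern of outputs on the window'' is not a function of the $r$-subset), then on a segment labelled by a monochromatic set in increasing order \emph{every} interior vertex is the center of an increasing $r$-subset of that set, so all interior vertices output the \emph{same} bit -- the output is constant on the segment, not merely periodic. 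Consequently no extremal analysis of ``periodic cuts'' is needed: all edges internal to a segment are simply uncut, and one only has to count internal edges versus boundary edges (this is the paper's Claim~\ref{claim1}). Your proposed periodic-case analysis would in fact fail: in $C_n^d$ the chords have \emph{odd} length, so under the alternating period-2 colouring they are bichromatic, i.e.\ cut, not ``monochromatic and uncut'' as you claim -- indeed the alternating colouring is the optimal cut and cuts all $m$ edges, so no bound of the form ``any periodic colouring loses a constant fraction'' can be true. The same misconception is why you cannot pin down the constant $\tfrac1d$ in the odd case; with the correct (constant-on-segments) conclusion, colouring $r$-subsets by the \emph{pair} of outputs of a matched pair $(u_i,v_i)$ forces each segment's outer cycle to be constant and its inner cycle to be constant, so inside segments only the $\le n$ matching edges out of $m=nd$ can be cut, which immediately gives the $\tfrac1d+\epsilon$ bound after accounting for boundaries.

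Second, your single application of Ramsey is not a legal instance: you take a monochromatic set $S$ of size $n$ equal to the number of vertices, so the IDs live in $[N]$ with $N=\twr_k(\Theta(n))$, which is far larger than $\mathrm{poly}(n)$; the hand-wave that $t=o(\log^* n)$ makes this ``fit inside $\mathrm{poly}(n')$'' does not work (already $N\ge 2^{n}$ for tower height $2$). The paper's fix is structural and you need it: keep the window size $r=O(dT)$ and the monochromatic block size $\ell=O(adT)$ \emph{constant} in terms of $T$, let $N=r_r(\ell;2)$ (resp.\ $r_r(2\ell;4)$), choose the graph size $n$ to be a constant multiple of $N$ so that $\log^* n=O(dT)$, and \emph{repeatedly} extract disjoint monochromatic $\ell$-sets from $[n]$ until fewer than $N$ labels remain; concatenating these blocks around the cycle makes all but an $O(1/a)$ (resp.\ $\alpha$) fraction of the edges internal to constant-output segments, and the leftover labels and segment boundaries are exactly the error terms that turn into the final $\tfrac1{2a}$ and $(d-1)\alpha$ slack. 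Without this repeated-extraction step you have no legal labelling, and without the ``constant, not periodic'' observation you have no way to finish the counting; both are needed, and both are supplied by the paper's proof.
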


Note that since the \textsf{LOCAL} model is less restrictive than the \textsf{CONGEST} model, this theorem is also valid in the \textsf{CONGEST} model.

\begin{proof} We prove the two cases of the theorem separately starting with $d$ even:

\smallskip

Let $d$ be an even integer and assume that there exists a local deterministic algorithm \texttt{A} running in $T$ rounds and guaranteeing a $\frac{1}{a}$-approximation for some fixed $a\ge 1$, with $T$ to be defined later. Note that since \texttt{A} runs in $T$ rounds, the output of a vertex $v$ in $C_n^d$ only depends on the IDs of the vertices at distance at most $T$ from $v$ in $C_n^d$, and thus at distance at most $(d-1)T$ of $v$ in $C$ (more precisely, since the subgraph induced by each ball of a given radius is the same, the output of a vertex only depends on the sequence of the IDs of its neighbors at distance $(d-1)T$ in $C$, in clockwise order).

\smallskip

Consider a subset $U=\{u_1,\ldots,u_\ell\}$ of $[n]$, with $u_1<\ldots <u_\ell$, and assume that $\ell$ consecutive vertices of $C_n^d$ (in clockwise order) have IDs $u_1,u_2,\ldots,u_\ell$, in this order. In what follows, we identify each vertex of  $C_n^d$ with its ID. We now set $r=2(d-1)T+1$, and start by proving the following claim (recall that $1/a$ is the approximation ratio of \texttt{A}):
\begin{cla}
If $\ell\ge 4adT$, at least $\frac{\ell d}{2}\left( 1-\frac{1}{2a}\right)$ edges of $C_n^d$ have both endpoints in $\tilde{U}=\left\lbrace u_{(r-1)/2+1}, \ldots, u_{\ell-(r-1)/2}\right\rbrace$.
\label{claim1}
\end{cla}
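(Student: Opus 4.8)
The plan is to prove Claim~\ref{claim1} by a completely elementary edge count, once we observe that $\tilde U$ is nothing but an arc of consecutive vertices of the base cycle $C$. Indeed, since $r=2(d-1)T+1$ we have $(r-1)/2=(d-1)T$, so $\tilde U=\{u_{(d-1)T+1},\dots,u_{\ell-(d-1)T}\}$ is obtained from the consecutive vertices $u_1,\dots,u_\ell$ by deleting the $(d-1)T$ of them at each end; it is therefore an arc of $m:=\ell-2(d-1)T$ consecutive vertices of $C$. The hypothesis $\ell\ge 4adT$, together with $a\ge 1$ and $T\ge 1$, gives $m\ge 4dT-2(d-1)T=2T(d+1)\ge d$, so this arc is at least as long as the longest ``chord length'' used to build $C_n^d$; this is all we will need about its size.

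I would then count the edges of $C_n^d$ having both endpoints in a prescribed arc of $m$ consecutive vertices of $C$. Recall that two vertices of $C_n^d$ are adjacent exactly when their distance along $C$ belongs to $J:=\{1,3,5,\dots,d-1\}$, a set of $d/2$ values, and that (since $n\ge 2d$) no two vertices are adjacent for two distinct such reasons. For a fixed $j\in J$, an arc of $m$ consecutive vertices of $C$ contains exactly $m-j$ pairs of vertices at $C$-distance $j$, and each such pair is an edge (no ``wrap-around'' subtlety arises, again because $n\ge 2d$). Summing over $j\in J$ and using $1+3+\cdots+(d-1)=(d/2)^2$, such an arc induces at least
$$\sum_{j\in J}(m-j)\;=\;\frac{d}{2}\,m-\frac{d^{2}}{4}\;=\;\frac{d}{2}\Bigl(m-\frac{d}{2}\Bigr)$$
edges of $C_n^d$. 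Applying this to $\tilde U$ with $m=\ell-2(d-1)T$ yields at least $\tfrac{d}{2}\bigl(\ell-2(d-1)T-\tfrac{d}{2}\bigr)$ such edges, and a short computation using $\ell\ge 4adT$ (the constant $4adT$ being chosen so that $\tfrac{\ell}{2a}$ absorbs the lower-order term $2(d-1)T+\tfrac{d}{2}$) shows $\ell-2(d-1)T-\tfrac{d}{2}\ge \ell\bigl(1-\tfrac{1}{2a}\bigr)$. Hence at least $\tfrac{d}{2}\cdot\ell\bigl(1-\tfrac{1}{2a}\bigr)=\tfrac{\ell d}{2}\bigl(1-\tfrac{1}{2a}\bigr)$ edges of $C_n^d$ have both endpoints in $\tilde U$, which is the claim.

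I do not expect a genuine obstacle inside the claim — it is essentially bookkeeping — so the only care required is in the two observations above: identifying $\tilde U$ as an arc of exactly $\ell-2(d-1)T$ vertices (this is where the value $r=2(d-1)T+1$ and the lower bound on $\ell$ enter), and the ``no double counting / no wrap-around'' remarks, both resting on $n\ge 2d$. The real work of Theorem~\ref{thm:lb} will come afterwards, when the claim is fed into a Ramsey argument: colour each $r$-element subset $\{x_1<\cdots<x_r\}$ of the ID range by the value the $T$-round algorithm \texttt{A} outputs at the centre of an arc carrying these IDs in increasing order (two colours), invoke the Ramsey bound of Theorem~\ref{thm:ramsey} with $q=2$ to tile $C_n^d$ into about $n/\ell$ arcs of length $\ell$, each carrying the IDs of its own monochromatic set, and observe that \texttt{A} is then constant on each corresponding $\tilde U$ and so cuts none of the $\ge\tfrac{\ell d}{2}(1-\tfrac{1}{2a})$ edges inside it. Summing over the arcs, the cut output by \texttt{A} then has size at most $\tfrac{nd}{2}-\tfrac{nd}{2}(1-\tfrac{1}{2a})=\tfrac{nd}{4a}$, while $\opt=\tfrac{nd}{2}$ because $C_n^d$ is bipartite; since the ID range is only polynomial in $n$, pushing this through forces $T=\Omega(\log^{*}n)$.
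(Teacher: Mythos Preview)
Your proof is correct and follows essentially the same approach as the paper's: both count, for each odd jump length $j\in\{1,3,\dots,d-1\}$, the number of such edges lying inside the arc $\tilde U$ of $m=\ell-(r-1)$ consecutive vertices, and then use $\ell\ge 4adT$ to absorb the lower-order terms (your count of exactly $m-j$ per jump is in fact slightly sharper than the paper's ``more than $\ell-(r-1)-2k$'', giving a $d^2/4$ instead of $d^2/2$). One small caveat: your final inequality $\tfrac{\ell}{2a}\ge 2(d-1)T+\tfrac{d}{2}$ tacitly needs $4T\ge d$, which does not follow from $T\ge 1$ alone when $d$ is large---the paper's version has the same feature, with the slightly stronger requirement $2T\ge d$---but this is harmless for the application, since one may always pad a $T$-round algorithm to run for $\max(T,d)$ rounds.
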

\begin{proof}
Count the edges by the length of the jump they make around the cycle $C$. For a jump of size $k$, there are more than $\ell-(r-1)-2k$ such edges. Summing for $k$ odd from $1$ to $d-1$ we obtain at least $$\frac{\ell d}{2}-\frac{d(r-1)}{2}-\frac{d^2}{2}$$ edges that have both endpoints in $\tilde{U}$. When $\ell\ge 4adT\ge 2a(r-1-d)$, this is at least $\frac{\ell d}{2}\left( 1-\frac{1}{2a}\right)$, which proves the claim.
\end{proof}

Consider some $r$-element subset $S=\{a_1,a_2,\ldots,a_r\}$ of $[n]$, with $a_1<a_2<\cdots <a_r$. We assign the IDs $a_1,a_2,\ldots,a_r$ (in this order) to $r$ consecutive vertices of $C_n^d$, in clockwise order, and look at the output of the vertex with ID $a_{(r+1)/2}$ (call it $v$) given by the algorithm \texttt{A}. Note that this output only depends on $\left(a_1,a_2,\ldots,a_r\right)$. If $v$ joins the left side of the cut (according to \texttt{A}), we color the set $S$  with color 0 and otherwise with color 1.

Consider again a subset $U=\{u_1,\ldots,u_\ell\}$ of $[n]$, with $u_1<\ldots <u_\ell$, and assume that $\ell$ consecutive vertices of $C_n^d$ (in clockwise order) have IDs $u_1,u_2,\ldots,u_\ell$. It follows from the definition of $\tilde{U}$ that if $v\in \tilde{U}$, all the vertices at distance at most $T$ from $v$ in $C_n^d$ are in $U$. This implies that if all $r$-elements subsets of $U$ are assigned the same color in the coloring defined above, all the vertices of $\tilde{U}$ choose the same side of the cut.

\smallskip

We can now apply Theorem~\ref{thm:ramsey} with $r$ as defined above and $\ell=\lceil 4adT\rceil$ satisfying the condition of Claim \ref{claim1}. Let $N=r_r(\ell;2) \leq \twr_r(c \ell)$ be given by Theorem~\ref{thm:ramsey}.  We now let $n$ be the smallest even integer which is greater than $2aN$ and consider $C_n^d$. Observe that by Claim~\ref{tower_claim}, $\log^* n=O(\log^* N)=O(r+\log^*(c \ell))=O(dT+\log^*(dT))=O(dT)$. Since $d$ is a fixed constant, it follows that we have $T=\Omega(\log^*n)$, as desired.

By Theorem~\ref{thm:ramsey}, there is an $\ell$-element subset $U_1$ of $[n]$ such that all $r$-elements subsets of $U_1$ have the same color. As long as there are more than $N$ remaining labels, we repeatedly apply Theorem~\ref{thm:ramsey} and thus find disjoint $\ell$-element subsets $U_1,U_2,\ldots ,U_k$ of $[n]$, with the same property (for each $U_i$, all $r$-elements subsets of $U_i$ have the same color), until $[n]-\bigcup_{i=1}^k U_i$ contains fewer than $N$ elements. We write each set $U_i$ as $\left\lbrace u_1^i,\ldots ,u_\ell^i\right\rbrace$, with $u_1^i<\cdots <u_\ell^i$.

\smallskip

Finally, we assign these IDs to consecutive vertices in clockwise order around $C_n^d$:
$$u_1^1,\ldots ,u_\ell^1, u_1^2, \ldots ,u^2_\ell,\ldots,u_1^k, \ldots, u_\ell^k $$

By Claim \ref{claim1}, the subgraph induced by each $\tilde{U_i}$ contains at least $\frac{md}{2}\left(1-\frac{1}{2a} \right)$ edges for all $1\leq i \leq k$, and it follows from Theorem~\ref{thm:ramsey}  and our coloring of the $r$-elements sets that for each $1\leq i \leq k$, all vertices in $\tilde{U_i}$ choose the same side of the cut.

\smallskip

Since $k \geq \frac{n-N}{\ell}$, by running algorithm \texttt{A} on this particular labelling of $C_n^d$, at least $$ \frac{n-N}{\ell}\frac{\ell d}{2}\left(1-\frac{1}{2a}\right) = \left(1-\frac{1}{2a}\right)\frac{d}{2} \left( n-N\right)>\frac{nd}{2} \left(1-\frac{1}{2a}\right)^2>\frac{nd}{2}\left(1-\frac{1}{a} \right)$$  edges are not in the cut (the last inequality uses the fact that $1/a \le 1$). Thus, there are strictly less than $\frac{nd}{2a}$ edges in the cut. Since $C_n^d$ is bipartite and $d$-regular, the optimal cut contains $\frac{nd}{2}$ edges (i.e.\ all the edges are in the cut). This proves that \texttt{A} cannot be a $\frac{1}{a}$-approximation, yielding a contradiction.

\medskip

Assume now that $d$ is odd and that an algorithm \texttt{B} achieves a $\left(\frac{1}{d}+\epsilon \right)$-approximation for some $\epsilon>0$ in $T$ rounds. We proceed as before except that instead of considering vertices one by one, we consider pairs of vertices $u_i,v_i$ that are matched in the construction of $D_{2n}^d$, the graph that will be used here. Similarly as before, a matched pair $(u_i,v_i)$ cannot see more than $T(d-1)$ labels away. Set $r=4T(d-1)+4$, and consider $r/2$ consecutive vertices (in clockwise order) $u_1,\ldots,u_{r/2}$ on the outer cycle of $D_{2n}^d$. For each $1\le i\le r/2$, let $v_i$ be the neighbor of $u_i$ on the inner cycle. This implies that $v_1,\ldots,v_{r/2}$ are consecutive (in clockwise order) on the inner cycle.

Fix an arbitrary $r$-element subset $\{a_1,\ldots,a_r\}$, with $a_1<a_2<\ldots<a_r$. For each $1\le i \le r/2$ assign the ID $a_i$ to $u_i$ and the ID $a_{r/2+i}$ to $v_i$. Since the sides of the cut chosen by $u_{r/4}$ and its neighbor $v_{r/4}$ are entirely determined by the set $\{a_1,\ldots,a_r\}$, we can color each set $\{a_1,\ldots,a_r\}$ with the pair $(x,y)\in \{ 0,1\}^2$ such that $u_{r/4}$ chooses side $x$ and $v_{r/4}$ chooses side $y$ (again we associate the left side of the cut with 0, and the right side of the cut with 1).

By exactly the same argument as in Claim \ref{claim1}, we can take $\ell$ even and large enough so that at least $\frac{\ell (d-1)}{2}\left(1-\frac{\alpha}{2}\right)$ edges are not in the cut in both copies of $C_n^{d-1}$, for any fixed $\alpha$. By Theorem~\ref{thm:ramsey} (with $q=4$), there is an integer $N=r_r(2 \ell;2)$ such that for all $n\geq N$, there is a $2 \ell$-element subset $U_1=\{a_1,\ldots, a_{2 \ell}\}$ of $[2n]$ with $a_1<\cdots<a_{2 \ell}$ such that all $r$-element subsets $X$ of $U_1$ have the same color. We take $n>\frac{N}{\alpha}$ even and repeatedly apply Theorem~\ref{thm:ramsey} as before, obtaining $2 \ell$-element subsets $U_1,U_2,\ldots,U_k$. We then assign the elements of each $U_i$ to consecutive vertices in clockwise order in $D^d_{2n}$ (the $\ell$ smaller elements of $U_i$ are assigned to the vertices of the outer cycle, and the $\ell$ larger elements are assigned to their corresponding neighbors in the inner cycle). As before, the fact that all
$r$-elements subsets of $U_i$ have the same color implies that on the portion of $D^d_{2n}$ corresponding to $U_i$, all the vertices of the outer cycle (except at the boundary) choose the same side $x$ of the cut, and all the vertices of the inner cycle (except at the boundary) choose the same side $y$ of the cut (but $x$ and $y$ might be different).

This ensures that on each copy of $C_n^{d-1}$, at least $$\frac{2n-N}{2 \ell}\frac{m(d-1)}{2}\left(1-\frac{\alpha}{2} \right) > \frac{n(d-1)}{2}\left(1-\frac{\alpha}{2}\right)- \frac{n(d-1)}{2}\left(1-\frac{\alpha}{2}\right)\frac{\alpha}{2}>\frac{n(d-1)}{2}\left(1-\alpha \right)$$ edges are not in the cut after running algorithm \texttt{B}. It follows that in $D_{2n}^d$, at least $n(d-1)(1-\alpha)$ edges are not in the cut. Since $D_{2n}^d$  is bipartite and contains exactly $nd$ edges, this shows there are less than $n+n(d-1)\alpha$ edges in the cut, which is a $(\frac{1}{d}+(d-1)\alpha)$-fraction. Setting $\alpha=\frac{\epsilon}{d-1}$, this fraction is less than $\frac{1}{d}+\epsilon$, which is a contradiction.
\end{proof}

\medskip

A direct consequence of our theorem is the following corollary that matches the round complexity obtained by Kawarabayashi and Schwartzman \cite{S17}:

\begin{cor}
Deterministic constant factor approximation on general graphs for {\sc MaxCut} in the \textsf{LOCAL} model requires $\Omega(\log^* n)$ rounds.
\end{cor}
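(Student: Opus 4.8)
The plan is to derive the corollary directly from Theorem~\ref{thm:lb}. The key observation is that the class of bipartite $d$-regular graphs is a subclass of the class of general graphs, so any algorithm that works (and achieves a constant factor approximation) on all general graphs in particular works on this restricted class. Hence a lower bound for the restricted class immediately transfers to a lower bound for general graphs, provided we pick a single fixed value of $d$ for which Theorem~\ref{thm:lb} applies.

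Concretely, first I would fix $d=2$ (any fixed even integer works, but $d=2$ is the simplest — these are disjoint unions of even cycles, i.e.\ the graphs $C_n^2$). Suppose, for contradiction, that there is a deterministic algorithm $\texttt{A}$ in the \textsf{LOCAL} model that guarantees a constant factor approximation for {\sc MaxCut} on all graphs and runs in $T(n) = o(\log^* n)$ rounds. Then $\texttt{A}$ in particular runs on every bipartite $2$-regular $n$-vertex graph, still guaranteeing the same constant factor approximation and still in $o(\log^* n)$ rounds. This directly contradicts the first bullet of Theorem~\ref{thm:lb} (with $d=2$), which asserts that any such algorithm restricted to that class requires $\Omega(\log^* n)$ rounds. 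Therefore no such $\texttt{A}$ exists, which is exactly the statement of the corollary.

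Since this is a one-line specialization argument, there is essentially no obstacle: the only thing to be slightly careful about is that Theorem~\ref{thm:lb} is stated for a \emph{fixed} $d$, so one must commit to a particular constant $d$ before invoking it (rather than trying to let $d$ grow with $n$), and that the approximation guarantee in the hypothesis of the corollary is a constant factor, matching the hypothesis of the even case of Theorem~\ref{thm:lb}. No new constructions, no new applications of Ramsey's theorem, and no numerical estimates are needed — everything is inherited from the proof of Theorem~\ref{thm:lb} already given above.
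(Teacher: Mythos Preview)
Your proposal is correct and matches the paper's approach: the paper presents this corollary as a direct consequence of Theorem~\ref{thm:lb} without further argument, and your specialization (restrict any purported algorithm to the class of bipartite $d$-regular graphs for a fixed even $d$, then invoke the first bullet of Theorem~\ref{thm:lb}) is exactly the intended one-line deduction.
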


\subsection{Directed cut} In this section, we consider the similar problem {\sc MaxDiCut} where edges are oriented and we only count the edges going from the left side of the cut to the right side. We can prove similar bounds on the quality of the solution one can hope to achieve by simply orienting our lower bound graphs $C^d_n$ and $D^d_{2n}$: we will define $\overrightarrow{C^d_n}$ as the same graph as $C^d_n$ where we orient all the edges in clockwise order. Similarly, $\overrightarrow{D^d_{2n}}$ is obtained from $D^d_{2n}$ by orienting all the edges in clockwise order on both the inner and outer cycle, and all the edges in the remaining perfect matching from the outer cycle to the inner cycle. We can again apply Ramsey's theorem as in the proof of Theorem \ref{thm:lb} to obtain the following result :
\begin{thm}
 Let $d>0$ be a fixed integer.
\begin{itemize}
\item If $d$ is even, any deterministic algorithm that guarantees a constant factor approximation for {\sc MaxDiCut} on the class of $d$-regular bipartite $n$-vertex oriented graphs requires $\Omega(\log^*n)$ rounds in the \textsf{LOCAL} model.
\item If $d$ is odd, then for any $\epsilon>0$, any deterministic $\left(\frac{2}{d}+\epsilon \right)$-approximation of {\sc MaxDiCut} on the class of $d$-regular bipartite $n$-vertex oriented graphs requires $\Omega(\log^*n)$ rounds in the \textsf{LOCAL} model.
\end{itemize}
\label{thm:oriented}
\end{thm}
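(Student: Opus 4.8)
The plan is to mimic the proof of Theorem~\ref{thm:lb} almost verbatim, replacing the cut size by the directed cut size, and to observe that orienting all edges of $C_n^d$ (resp. $D_{2n}^d$) clockwise around the cycle $C$ means that \emph{every} edge of the original (bipartite, $d$-regular) graph can still simultaneously be in the directed cut: indeed, if $(A,B)$ is the proper bipartition with all of $A$ on one side and one orients so that every arc has its tail in $A$ and head in $B$, then the directed cut has size $nd/2$ (resp.\ $nd$), which is the total number of arcs. Hence the optimal {\sc MaxDiCut} value on $\overrightarrow{C_n^d}$ is $nd/2$ and on $\overrightarrow{D_{2n}^d}$ is $nd$, exactly as in the undirected case. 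So it suffices to show that a local deterministic algorithm is forced, on a bad labelling obtained from Ramsey's theorem, to leave almost all arcs \emph{out} of the directed cut.

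For the even case, suppose \texttt{A} runs in $T$ rounds and is a constant-factor approximation. Run exactly the Ramsey construction from the proof of Theorem~\ref{thm:lb}: find disjoint $\ell$-element monochromatic sets $U_1,\dots,U_k$ and lay them along $C$. On each $\tilde U_i$ every vertex chooses the same side, say side $x_i\in\{0,1\}$; but then \emph{no} arc internal to $\tilde U_i$ is counted in the directed cut, regardless of the orientation (an arc from $x$ to $x$ is never in a directed cut from $0$ to $1$). Claim~\ref{claim1} still counts at least $\frac{\ell d}{2}(1-\frac1{2a})$ such internal arcs per block, and summing over the $k\ge (n-N)/\ell$ blocks leaves strictly more than $\frac{nd}{2}(1-\frac1{a})$ arcs outside the directed cut, so strictly fewer than $\frac{nd}{2a}$ arcs inside it — contradicting a $\frac1a$-approximation, since the optimum is $\frac{nd}{2}$. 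The bound $T=\Omega(\log^* n)$ comes from Theorem~\ref{thm:ramsey} and Claim~\ref{tower_claim} exactly as before.

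For the odd case one argues on matched pairs $(u_i,v_i)$ in $\overrightarrow{D_{2n}^d}$ just as in the second half of the proof of Theorem~\ref{thm:lb}: after the Ramsey step, on each block the outer-cycle vertices all pick some side $x$ and the inner-cycle vertices all pick some side $y$. The $2\cdot\frac{\ell(d-1)}{2}(1-\frac\alpha2)$ arcs internal to the two copies of $C_n^{d-1}$ are then all \emph{out} of the directed cut (monochromatic on each side), so at least $\approx n(d-1)(1-\alpha)$ arcs are not counted; the only arcs that can possibly be in the directed cut are the $n$ matching arcs (contributing at most $n$) plus the $O(\ell)$ boundary arcs per block (contributing $O(\alpha n)$ in total). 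This leaves at most $n + O(\alpha n)$ arcs in the directed cut out of the optimum $nd$, which for $\alpha$ small enough is below $(\frac2d+\epsilon)\cdot nd$ — wait, here the optimum is $nd$ and $n/(nd)=1/d$, so the extra factor $2$ in the statement needs care: the matching has $n$ arcs, the two cycles have $n(d-1)$ arcs, total $nd$, and only the $n$ matching arcs survive, giving ratio $1/d$, not $2/d$. I therefore expect the main subtlety to be a careful accounting of exactly which arcs survive and a matching lower-bound construction (some $d$-regular oriented bipartite graph on which a $0$-round or trivial algorithm already achieves $\approx\frac2d$), which is what forces the ``$\frac2d$'' threshold in the statement rather than ``$\frac1d$''; concretely, one should check whether allowing the boundary arcs and a cleverer sign choice per block pushes the achievable fraction up to $\frac2d$, and state the lower-order term $(d-1)\alpha$ so that setting $\alpha=\epsilon/(d-1)$ closes the argument. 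Modulo that bookkeeping, every other step is identical to Theorem~\ref{thm:lb}, and the $\Omega(\log^* n)$ conclusion follows from the same application of Theorems~\ref{thm:ramsey} and Claim~\ref{tower_claim}.
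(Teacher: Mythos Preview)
Your overall plan---reuse the Ramsey argument of Theorem~\ref{thm:lb} on the oriented graphs $\overrightarrow{C_n^d}$ and $\overrightarrow{D_{2n}^d}$---is exactly what the paper does. The genuine gap is your computation of the optimum directed cut.

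You assert that ``every edge of the original (bipartite, $d$-regular) graph can still simultaneously be in the directed cut'' under the clockwise orientation. This is false: the orientation is \emph{fixed} (clockwise), not chosen to match the bipartition. In $\overrightarrow{C_n^d}$, the arc $u_i\to u_{i+k}$ (with $k$ odd) goes from the even class to the odd class when $i$ is even, and from odd to even when $i$ is odd; so the proper $2$-colouring puts exactly half the arcs in the dicut. In fact each ``layer'' of arcs of a given jump length is a disjoint union of directed cycles, and on a directed cycle the maximum dicut is exactly half the arcs; summing over layers gives $\mdc(\overrightarrow{C_n^d})=nd/4=m/2$, not $m$. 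The same reasoning on the two copies of $C_n^{d-1}$ and on the matching in $\overrightarrow{D_{2n}^d}$ gives $\mdc(\overrightarrow{D_{2n}^d})=nd/2=m/2$, not $m=nd$.

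This is precisely the missing ingredient you flagged in the odd case. With optimum $nd/2$, the Ramsey argument leaves the algorithm with at most $n+n(d-1)\alpha$ arcs in the dicut, and $\tfrac{n+n(d-1)\alpha}{nd/2}=\tfrac{2}{d}+\tfrac{2(d-1)\alpha}{d}$, so choosing $\alpha=\tfrac{\epsilon d}{2(d-1)}$ yields the $\tfrac{2}{d}+\epsilon$ bound. In the even case your conclusion ``fewer than $nd/(2a)$ arcs in the dicut'' does not contradict a $\tfrac1a$-approximation once the optimum is only $nd/4$; you must rerun Claim~\ref{claim1} with $\ell$ chosen against $2a$ rather than $a$ (equivalently, replace $a$ by $2a$ throughout) so that the dicut is forced below $nd/(4a)$. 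With these two corrections the argument is complete and matches the paper's.
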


We note a slight difference with Theorem~\ref{thm:lb} in the case where $d$ is odd. In Theorem~\ref{thm:oriented} the approximation ratio is only bounded by $\frac{2}{d}$, instead $\frac{1}{d}$. This happens because with our definition of $\overrightarrow{D^d_{2n}}$, one can check that the optimal directed cut is of size $\frac{nd}{4}=\frac{m}{2}$ instead of $m$ in the undirected case.

\section{Matching the approximation ratio when $d$ is odd} 
\label{sec:low}

\subsection{Weak-coloring}

In a landmark paper, Naor and Stockmeyer~\cite{N93} addressed the issue of what can or cannot be computed locally. In particular, they proved one result that is relevant in our case. 

A \textit{weak coloring} of a graph is a coloring of its vertices such that each vertex has at least one neighbor with a different color. Observe that a weak coloring using only $2$ colors is a $\frac{1}{d}$-approximation of the {\sc MaxCut} problem when the graph is $d$-regular. Let $O_d$ be the class of graphs of maximum degree $d$ where the degree of every vertex is odd. Naor and Stockmeyer proved the following theorem.
\begin{thm}[\cite{N93}]
There is a constant $b$ such that, for every $d$, there is a deterministic algorithm with round complexity $\log^* d + b$ in the \textsf{CONGEST} model that solves the weak $2$-coloring problem in the class $O_d$.
\end{thm}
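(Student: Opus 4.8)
The plan is to exhibit an explicit deterministic algorithm in two phases: a classical \emph{colour-reduction} phase, followed by a short \emph{parity} phase which is the only place the hypothesis that all degrees are odd is used. Throughout, all messages are IDs, colours, or orderings, so they fit in $O(\log n)$ bits and the algorithm runs in the \textsf{CONGEST} model.

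\emph{Phase 1 (colour reduction).} Start from the proper colouring of $G$ given by the vertex IDs, using $n^{O(1)}$ colours, and iterate the one‑round Cole--Vishkin/Linial reduction. In one step a weak (or proper) $K$-colouring $\chi$ becomes a weak colouring with $O(\log K)$ colours: each vertex $v$ computes $p=\min\{\,\mathrm{firstdiff}(\chi(v),\chi(u)):u\in N(v)\,\}$, where $\mathrm{firstdiff}(a,b)$ is the index of the lowest-order bit in which the strings $a,b$ differ, and recolours itself with the pair $(p,\chi(v)[p])$. This is again a weak colouring, since the neighbour realizing the minimum has a different bit in position $p$, so it cannot receive $v$'s new colour. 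Iterating brings the number of colours down to some bound $h(d)$.

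\emph{Phase 2 (oddness).} Given a weak $h(d)$-colouring $\psi$, orient every edge $uv$ from the endpoint of smaller $\psi$-value to the one of larger $\psi$-value, breaking ties by ID. The orientation is acyclic (a directed cycle would force $\psi$ to be constant along it, and then the ID tie-break gives a contradiction), and because every degree is odd we have $d^{+}(v)\neq d^{-}(v)$ at every vertex. Recolouring $v$ according to a "majority/minimum'' rule on this orientation --- e.g.\ the parity of $\min\{\,d^{-}(u):u\text{ an out-neighbour of }v\,\}$, re-orienting along the partial colouring and iterating $O(1)$ times until it stabilizes --- yields a weak $2$-colouring. Oddness is precisely what keeps these quantities strictly defined and what ultimately forces each vertex to retain a differently-coloured neighbour; note there is no analogue of this step for even degrees (on cycles even a weak $O(1)$-colouring needs $\Omega(\log^* n)$ rounds).

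\emph{The bound $\log^* d+b$ rather than $\log^* n+b$.} Phase 1 uses $\log^*K_0+O(1)$ rounds, where $K_0$ is the size of the colour space we start from. Instead of the raw IDs, we first spend $O(1)$ rounds letting each vertex collect only the \emph{relative order} of the IDs occurring in a constant-radius ball around it; since each vertex has at most $d$ neighbours, there are only $g(d)$ such local order types, so taking this as the initial colouring gives $K_0\leq g(d)$ and hence $\log^* g(d)+O(1)=\log^* d+O(1)$ rounds. Justifying rigorously that one may assume without loss of generality that the entire algorithm depends only on these local order types is done by a Ramsey-type argument in the spirit of (and using) Theorem~\ref{thm:ramsey}, exactly as in the lower-bound proofs of this paper. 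The genuinely delicate points --- and the main obstacles --- are the Phase 2 recolouring rule (pinning down a rule that provably collapses $O(1)$ colours to $2$ using oddness correctly) and this $\log^* d$ accounting; Phase 1 itself is entirely standard.
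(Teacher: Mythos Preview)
The paper does not prove this theorem: it is quoted from Naor and Stockmeyer~\cite{N93} and used as a black box to motivate the simpler median algorithm that follows. There is thus no ``paper's own proof'' to compare your attempt against.

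On the merits of your sketch: Phase~1 is fine and is the standard Cole--Vishkin reduction adapted to weak colourings. The two places you yourself flag as delicate are in fact genuine gaps. In Phase~2 you never specify a rule; ``majority/minimum $\ldots$ iterating $O(1)$ times until it stabilizes'' is not an algorithm, and nothing you wrote shows termination or correctness. The actual Naor--Stockmeyer reduction from $O(1)$ weak colours to $2$ is a concrete sequence of single-colour elimination steps, each using oddness in a specific way. More seriously, your $\log^* d$ accounting does not work as written: the ``local order type in a constant-radius ball'' is \emph{not} a weak colouring in general --- two adjacent vertices can have isomorphic ordered neighbourhoods (think of interior vertices of a monotone-labelled path) --- so you cannot feed it into Phase~1 as the starting colouring $K_0$. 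The way \cite{N93} obtains dependence on $d$ rather than $n$ is not by altering the initial colouring but by a Ramsey argument showing that any constant-round algorithm for an LCL can be taken to be order-invariant; once that is established, the number of possible $T$-neighbourhood views is bounded purely in terms of $d$, and the round count follows. Your last paragraph gestures at this Ramsey step but mixes it with the incorrect ``order types as starting colours'' idea.
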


As discussed above, this result directly implies that one can produce a local deterministic $\frac{1}{d}$-approximation of the {\sc MaxCut} problem on $d$-regular graphs. However, the result given here is much stronger than what we are looking for as in this case \emph{every} vertex has at least one incident edge in the cut. A natural question is whether a faster algorithm (of round complexity that does not depend on $d$) exists for the {\sc MaxCut} problem on $d$-regular graphs with $d$ odd. In the next section, we prove that such an algorithm exists.

\subsection{A simpler and faster algorithm}

Consider the following algorithm: every vertex $v$ collects the list of \textrm{IDs} of its neighbors, then $v$ chooses its side of the cut depending on whether the median value of this list is higher or lower than its own \textrm{ID}. We call this algorithm the \textit{median algorithm}. It runs in a single round and we prove the following theorem: 

\begin{thm}\label{thm:median1}
When the input is a $d$-regular graph on $n$ vertices, with $d$ odd, the median algorithm finds in 1 round a $\frac{1}{d}$-approximation for the {\sc MaxCut} problem in the \textsf{CONGEST} model.
\end{thm}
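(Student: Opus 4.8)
The plan is to show that for every vertex $v$, the median algorithm cuts at least one edge incident to $v$; since the graph is $d$-regular, this means the cut has size at least $n/2 \geq \tfrac{1}{d}\cdot\tfrac{nd}{2} \geq \tfrac1d\cdot\mathrm{OPT}$, because $\mathrm{OPT}\le m=\tfrac{nd}{2}$. So the entire content is the local claim: \emph{no vertex has all its incident edges uncut}.

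First I would set up notation: each vertex $v$ has distinct $\mathrm{ID}(v)$, and (since $d$ is odd) a well-defined median neighbor; $v$ joins side $0$ (say ``left'') if the median of its $d$ neighbors' IDs exceeds $\mathrm{ID}(v)$, and side $1$ otherwise. Now suppose for contradiction that some vertex $v$ has no incident edge in the cut, i.e.\ every neighbor of $v$ lands on the same side as $v$. Consider the vertex $w$ among $v$ and its neighbors with the \emph{largest} ID — but it is cleaner to instead look at the ``local orientation'' induced by the algorithm. The key step is: orient each edge $uv$ from the vertex of smaller ID to the vertex of larger ID (so the IDs give an acyclic orientation). A vertex $v$ with median-neighbor ID larger than its own has at least $\lceil d/2\rceil = \tfrac{d+1}{2}$ neighbors with larger ID, hence out-degree $\ge \tfrac{d+1}{2}$ in this orientation; it chooses side $0$. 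Symmetrically, a vertex choosing side $1$ has at least $\tfrac{d+1}{2}$ neighbors with smaller ID, hence in-degree $\ge \tfrac{d+1}{2}$.

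Now I would derive the contradiction from monochromaticity of the closed neighborhood $N[v]$. Suppose all of $N[v]$ is on side $0$. Let $u$ be the vertex of $N[v]$ with the maximum ID. Every neighbor of $u$ inside $N[v]$ has smaller ID than $u$, so $u$ has at least... — here one must be slightly careful, since $u$ might have neighbors outside $N[v]$. Instead, take $u$ to be the maximum-ID vertex of $N[v]$; since $u$ is on side $0$, $u$ has $\ge \tfrac{d+1}{2}$ neighbors with ID larger than $\mathrm{ID}(u)$, and all such neighbors lie outside $N[v]$ (as $u$ is the max in $N[v]$) — this is fine and gives no contradiction yet. The cleaner route: among \emph{all} vertices, let $v$ be a counterexample, and among counterexamples... no — better, argue directly on $N[v]$ using a counting/extremal argument on the two cases. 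If $N[v]$ is entirely on side $1$: let $u\in N[v]$ have the \emph{minimum} ID. Then $u$ is on side $1$, so $u$ has $\ge\tfrac{d+1}{2}$ neighbors with smaller ID — impossible if $u$ is, say, $v$ itself or if all of $u$'s small-ID neighbors were forced into $N[v]$; again one needs to handle neighbors outside $N[v]$. So the right statement to prove is the following \emph{global} one: the spanning subgraph $H$ of uncut edges has the property that $H$ contains no edge at all touched by... Actually the crisp claim is: \emph{in any monochromatic closed neighborhood, the extremal-ID vertex gets a contradiction}, and to make ``extremal'' work one applies it to the globally extremal vertex of a monochromatic connected component of the uncut subgraph.

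So the final structure I would write: let $H$ be the graph on $V(G)$ whose edges are the uncut edges; suppose $H$ has a vertex of degree $d$, i.e.\ some $v$ all of whose edges are uncut, and let $C$ be its connected component in $H$ — all vertices of $C$ are on the same side, say side $0$, and $C$ contains $v$ together with all $d$ of its neighbors. Let $u$ be the vertex of $C$ with the largest ID. Since $u$ is on side $0$, the median of $u$'s neighbor-IDs exceeds $\mathrm{ID}(u)$, so $u$ has a neighbor $u'$ with $\mathrm{ID}(u')>\mathrm{ID}(u)$; if $uu'$ is uncut then $u'\in C$, contradicting maximality of $u$; so $uu'$ is cut — but we still need that $u$ has an uncut edge to reach the contradiction, which holds precisely when $u=v$ or $u$ is one of $v$'s neighbors, both in $C$; in fact every vertex of $C$ has an uncut edge (it's in a component of $H$ with $\ge 2$ vertices), and that uncut edge stays in $C$, which is consistent — the contradiction is that $u$, being on side $0$, needs the median to be above it, giving a larger-ID neighbor, but that larger-ID neighbor, if joined by an uncut edge, would be in $C$. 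Hmm: so I must instead pick $u$ to be the max-ID vertex among $N[v]$ specifically and count: the $d$ neighbors of $v$ together with $v$ are $d+1$ vertices, all on side $0$; the max-ID one, call it $u$, satisfies $u\in N[v]$. If $u=v$: then all $d$ neighbors of $v$ have ID $<\mathrm{ID}(v)$, so the median of $v$'s neighbors is $<\mathrm{ID}(v)$, so $v$ is on side $1$, contradiction. If $u$ is a neighbor of $v$: then $u$'s edge to $v$ is uncut and $\mathrm{ID}(v)<\mathrm{ID}(u)$; that alone doesn't finish it, so instead apply the symmetric argument with the \emph{min}-ID vertex of $N[v]$ when $N[v]$ is all on side $1$, and handle side $0$ via max. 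The genuine obstacle — and the part I expect to need the most care — is exactly this boundary issue: the extremal vertex of $N[v]$ may have relevant neighbors outside $N[v]$, so the clean contradiction only lands when the extremal vertex is $v$ itself; the fix is to not fix $v$ in advance but to choose, among all vertices with an uncut incident edge, the one with globally extremal ID within its monochromatic $H$-component, at which point its forced larger- (resp. smaller-) ID neighbor is connected to it by a cut edge, and a short parity count on the remaining $\le d-1$ edges — at least $\tfrac{d-1}{2}$ of which must go the ``wrong'' way — shows the median cannot be on the side dictated, completing the contradiction. I would write out that extremal-component argument carefully as the crux, and leave the $n/2 \ge \mathrm{OPT}/d$ bookkeeping as a one-line consequence.
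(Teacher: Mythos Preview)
Your central claim---that the median algorithm gives a weak $2$-coloring, i.e.\ that \emph{every} vertex has at least one incident cut edge---is false, and none of your attempted extremal arguments can rescue it because the statement itself fails. Here is a concrete counterexample for $d=3$: take a vertex $v$ with $\mathrm{ID}(v)=4$ and neighbors with IDs $5,6,7$; give vertex $5$ further neighbors $8,9$, vertex $6$ further neighbors $10,11$, and vertex $7$ further neighbors $12,13$, and complete the graph to a $3$-regular graph in any way on the remaining vertices. The median of $v$'s neighbors is $6>4$, so $v$ is on side $0$; the medians of the neighbor lists of $5,6,7$ are $8,10,12$ respectively, all exceeding their own IDs, so $5,6,7$ are also on side $0$. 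Hence $v$ has no incident cut edge. Your ``extremal vertex in the $H$-component'' argument correctly shows that the max-ID vertex $u$ of a monochromatic component has at least $\tfrac{d+1}{2}$ cut edges, but this only tells you $u\ne v$; it says nothing about $v$ itself, and the ``parity count'' you allude to at the end has no target to hit.

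The paper does \emph{not} prove a per-vertex statement. It gives two global counting arguments, either of which yields cut size $\ge n/2$ directly. The slicker one: with $V^+,V^-$ the two sides, set $D=\sum_{v\in V^+}d^+(v)+\sum_{v\in V^-}d^-(v)\ge n\cdot\tfrac{d+1}{2}$ (each vertex contributes at least $\tfrac{d+1}{2}$); then $D$ counts every arc of $G$ once, plus the arcs from $V^+$ to $V^-$ a second time, so $|\overrightarrow{E}(V^+,V^-)|\ge D-\tfrac{dn}{2}\ge \tfrac{n}{2}$. The other proof uses exactly the piece of your argument that \emph{does} work---that every subset of a monochromatic component contains a vertex with at most $\tfrac{d-1}{2}$ neighbors inside (via source/sink of the acyclic orientation)---to bound the number of edges inside each monochromatic component and hence lower-bound the total boundary. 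In both cases the conclusion is an aggregate bound on the cut, not a local one; indeed the paper separately cites Naor--Stockmeyer for weak $2$-coloring precisely because the median algorithm does not achieve it.
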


We will actually give two different proofs of this result (i.e.\ Theorem~\ref{thm:median1} will be a direct consequence of Theorem~\ref{thm:median2}, which we proved next, but also of Theorem~\ref{thm:mediand}, which will be proved in Section~\ref{sec:oricut}).

\begin{thm}\label{thm:median2}
When the input is a $d$-regular graph on $n$ vertices, with $d$ odd, the median algorithm outputs in 1 round (in the \textsf{CONGEST} model) a cut of size at least $\tfrac{n}2+\tfrac{(d-1)(d+1)}4$.
\end{thm}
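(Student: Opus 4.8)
The plan is to analyze the median algorithm edge by edge and count, for each vertex $v$, how many of its $d$ incident edges are guaranteed to be cut. Fix a $d$-regular graph $G$ with $d$ odd, and write $m=\tfrac{nd}2$ for its number of edges. For a vertex $v$ with neighbor set $N(v)$, let $k_-(v)$ (resp. $k_+(v)$) be the number of neighbors with ID smaller (resp. larger) than $v$'s ID; so $k_-(v)+k_+(v)=d$. The median algorithm puts $v$ on the left side if its ID exceeds the median of $\{\mathrm{ID}(u):u\in N(v)\}$, i.e. if $k_-(v)\geq (d+1)/2$, and on the right side otherwise (i.e. if $k_+(v)\geq (d+1)/2$). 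The key local observation is: an edge $uv$ with $\mathrm{ID}(u)<\mathrm{ID}(v)$ is cut whenever $u$ is on the right and $v$ is on the left; and by the rule, a vertex $v$ is on the left precisely when at least $(d+1)/2$ of its neighbors have smaller ID, which is a condition I can charge against those smaller-ID neighbors.

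The main step is a clean double-counting identity. Orient every edge from its smaller-ID endpoint to its larger-ID endpoint; this gives an acyclic orientation in which each $v$ has out-degree $k_+(v)$ and in-degree $k_-(v)$. A vertex $v$ is placed on the left iff $k_-(v)\geq(d+1)/2$, on the right iff $k_+(v)\geq(d+1)/2$, and exactly one of these holds since $d$ is odd. Now I claim that the number of edges \emph{not} cut is small: an uncut edge $uv$ (oriented $u\to v$) has $u,v$ on the same side. If both are on the left, charge the edge to $v$ (which has $\geq(d+1)/2$ in-neighbors, hence at most $(d-1)/2$ out-neighbors); if both are on the right, charge it to $u$ (which has $\geq(d+1)/2$ out-neighbors, hence at most $(d-1)/2$ in-neighbors). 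Each vertex $w$ receives at most $(d-1)/2$ charges of the first type (it has at most $(d-1)/2$ in-edges when on the left) or at most $(d-1)/2$ charges of the second type (at most $(d-1)/2$ out-edges when on the right), but in fact I should be more careful: a left vertex can only be charged via its in-edges whose tail is also on the left, and this is at most $k_-(w)\le d$ in the worst case — so the naive bound gives too many uncut edges. The fix is to charge more tightly using the vertex on the side where it is "tight": specifically charge an uncut left-left edge $u\to v$ to $v$, and note $v$ has at most $(d-1)/2$ incoming edges total, so $v$ gets at most $(d-1)/2$ such charges; symmetrically a right-right edge is charged to its tail, which has at most $(d-1)/2$ outgoing edges. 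Since every uncut edge is charged exactly once and each vertex absorbs at most $(d-1)/2$ charges, there are at most $\tfrac{n(d-1)}2$ uncut edges, hence at least $m-\tfrac{n(d-1)}2=\tfrac{nd}2-\tfrac{n(d-1)}2=\tfrac n2$ cut edges — which is the right leading term but is weaker than the claimed $\tfrac n2+\tfrac{(d-1)(d+1)}4$.

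To get the extra additive $\tfrac{(d-1)(d+1)}4$ I need to exploit the two endpoints of the ID-order that the above charging double-wastes, or rather the structure near the extreme IDs. The sharper accounting: the vertex $w_{\max}$ with the globally largest ID has $k_-(w_{\max})=d$, so it is on the left and all $d$ of its edges go \emph{into} it; none of its (in-)edges can be charged to it under the "charge left-left edges to the head" rule beyond $(d-1)/2$, so at least $(d+1)/2$ of its incident edges are forced to be cut (its smaller-ID neighbors that are on the right), giving a surplus. Dually, the minimum-ID vertex contributes a surplus of $(d+1)/2$. More generally, running the charging argument and then examining the slack at the top $(d-1)/2$ and bottom $(d-1)/2$ IDs yields the claimed bound; alternatively, one proves directly by induction on $n$, removing the max-ID vertex, which has exactly $(d+1)/2$ or more cut edges, that the cut size is at least $\tfrac n2+\tfrac{(d^2-1)}4$. \textbf{The main obstacle} is making this surplus argument airtight: I must verify that deleting $w_{\max}$ and recursing does not require $G-w_{\max}$ to be $d$-regular (it is not), so the induction must be set up on a slightly more general statement — e.g. on graphs where each vertex has odd degree at most $d$ and the counting constant is tracked per vertex by its degree — and one must check the base case and the per-step gain of $(d+1)/2$ cut edges at the removed vertex against the loss in the "at most $(d-1)/2$ uncut per vertex" budget. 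I expect that once the right inductive invariant is chosen (crediting each degree-$d_v$ vertex with $\tfrac{d_v+1}4$ over the trivial $\tfrac12$ per edge), the step is a short computation.
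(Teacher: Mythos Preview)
Your charging has the roles reversed. A left--left edge $u\to v$ must be charged to its \emph{tail} $u$: since $u$ is on the left, $k_+(u)\le(d-1)/2$, so $u$ absorbs at most $(d-1)/2$ such charges. Charging it to the head $v$ fails because a left vertex has at \emph{least} $(d+1)/2$ incoming edges, not at most $(d-1)/2$ as you wrote. Symmetrically, right--right edges are charged to their head. With this correction the $n/2$ bound goes through.

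The genuine gap is the additive $(d-1)(d+1)/4$, which you leave as a plan. The induction you sketch cannot work with the invariant you propose: crediting each degree-$d_v$ vertex with $(d_v+1)/4$ would demand a cut of size $n(d+1)/4$ in a $d$-regular graph, but the paper's own extremal labelling of $D^d_{2n}$ shows the median cut can have size only about $n+O(d^2)$ on $2n$ vertices, far below $n(d+1)/2$; so that invariant is simply false. Your other suggestion --- tracking slack at the extreme IDs --- \emph{does} work and is the thing to complete: for $1\le j\le(d+1)/2$ the $j$-th highest-ID vertex is forced onto the left and has out-degree at most $j-1$, so it contributes at most $j-1$ (rather than $(d-1)/2$) to the left--left count; with the symmetric saving at the bottom IDs the total slack is $2\sum_{j=0}^{(d-1)/2} j = (d^2-1)/4$, exactly the missing term. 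This route is genuinely different from the paper's, which instead proves that every monochromatic set is $\tfrac{d-1}{2}$-degenerate (any such set contains a source and a sink of the ID orientation, which would lie on opposite sides, so some vertex has at most $\tfrac{d-1}{2}$ neighbours inside) and then bounds the boundary of each monochromatic component; the paper extracts the extra constant from a single large component or from all components being tiny, whereas your argument localises it at the two ends of the global ID order.
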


\begin{proof}
Let $G=(V,E)$ be a $d$-regular graph. In the proof, we say that a vertex is colored $0$ or $1$ by the median algorithm if it is assigned to the left side or the right side of the cut (respectively). We define the boundary of a subgraph of $G$ as the set of edges of $G$ that have exactly one endpoint in this subgraph. For every subset $A\subset V$ we denote by $G[A]$ the subgraph induced by $A$ and $\partial A$ the boundary of $G[A]$. 

We now orient each edge of $G$ from the vertex of lower \textrm{ID} to the vertex of higher ID: it can be observed that the median algorithm assigns color $0$ to vertices that have more outgoing edges than ingoing edges and side $1$ when it is the opposite. We also note that this orientation is acyclic, which is the key property that will be used in this proof. 

A \emph{monochromatic component} is a connected component of the subgraph of $G$ induced by one of two sides of the cut. We now prove the following two simple claims.

\begin{cla}
After running the median algorithm, any subset $A$ of a monochromatic component contains a vertex with at most $\frac{d-1}{2}$ neighbors in $A$.
\label{claim_1}
\end{cla}

Assume for the sake of contradiction that any vertex of $A$ has at least $\tfrac{d+1}2$ neighbors in $A$. Since the orientation of $G[A]$ is acyclic, there must be a sink and a source. It follows that one has outdegree at least $\tfrac{d+1}2$, and the other has indegree at least $\tfrac{d+1}2$. By definition of the median algorithm, the source and the sink must be on different sides of the cut, which contradicts the fact that $A$ is monochromatic.
This concludes the proof of Claim~\ref{claim_1}.

\begin{cla}
After running the median algorithm, for every monochromatic component $A$ of size $k$,  $\partial A$ contains at least $k+\tfrac{(d-1)(d+1)}4$ edges if $k\ge \tfrac{d+1}2$, and at least $\tfrac{d+1}2 \cdot k$ edges otherwise.
\label{claim_2}
\end{cla}

Let $A$ be a monochromatic component of $V$ of size $k$. Following Claim \ref{claim_1}, one can order the vertices $v_1,\ldots, v_k$ of $A$ such that for all $1\leq i \leq k$, $v_i$ has at most $\frac{d-1}{2}$ neighbors in $\left\lbrace v_{i+1},\ldots,v_k \right\rbrace$. If $k\ge \tfrac{d+1}2$, it follows that there are at most $\frac{d-1}{2}\cdot k-\tfrac{(d-1)(d+1)}8$ edges in $G[A]$. But $G$ is $d$-regular, therefore we have in this case:
$$|\partial A| + 2\left(\tfrac{d-1}{2}\cdot k -\tfrac{(d-1)(d+1)}8\right)\geq d\cdot k$$ which implies $|\partial A| \geq k+\tfrac{(d-1)(d+1)}4$.

If $k\le \tfrac{d+1}2$, $G[A]$ contains at most ${k \choose 2}\le k \cdot \tfrac{d-1}4$ edges, and a similar computation shows that $|\partial A| \geq k(d-\tfrac{d-1}2)\ge k\cdot \tfrac{d+
  1}2$. This concludes the proof of Claim~\ref{claim_2}.

\medskip

Finally, let $X$ be the larger side of the cut output by the median algorithm, i.e. $|X|\ge \tfrac{n}2$. Observe that the boundary $\partial X$ is the union of the boundaries of the connected components of $G[X]$ (since there are no edges between two such components). If at least one of these components has size at least $\tfrac{d+1}2$, then it follows from Claim \ref{claim_2} that $|\partial X|\ge \tfrac{n}2+\tfrac{(d-1)(d+1)}4$, as desired. Otherwise all connected components of $G[X]$ have size at most $\tfrac{d-1}2$, and it follows from Claim \ref{claim_2} that $|\partial X|\ge \tfrac{n}2 \cdot \tfrac{d+1}2=\tfrac{n}2+\tfrac{d-1}2\cdot \tfrac{n}2\ge \tfrac{n}2+\tfrac{(d-1)(d+1)}4$, since $n\ge d+1$ (recall that $G$ is $d$-regular, and thus contains at least $d+1$ vertices). This concludes the proof of Theorem~\ref{thm:median2}.
\end{proof}

Since a $d$-regular graph has $\tfrac{dn}2$ edges, we conclude that the median algorithm gives a $\tfrac1d$-approximation for the maximum cut when $d$ is odd, which proves Theorem~\ref{thm:median1}.

\medskip

Figure~\ref{fig:ex} gives an example of labelling of $D_{2n}^d$ for which the median algorithm gives a cut of size $\frac{n}{2} + (d-2)^2 + 1$. This shows that our analysis of the median algorithm in Theorem~\ref{thm:median1} is close to being best possible. 

\medskip

\begin{figure}[htb]
  \includegraphics[scale=0.5]{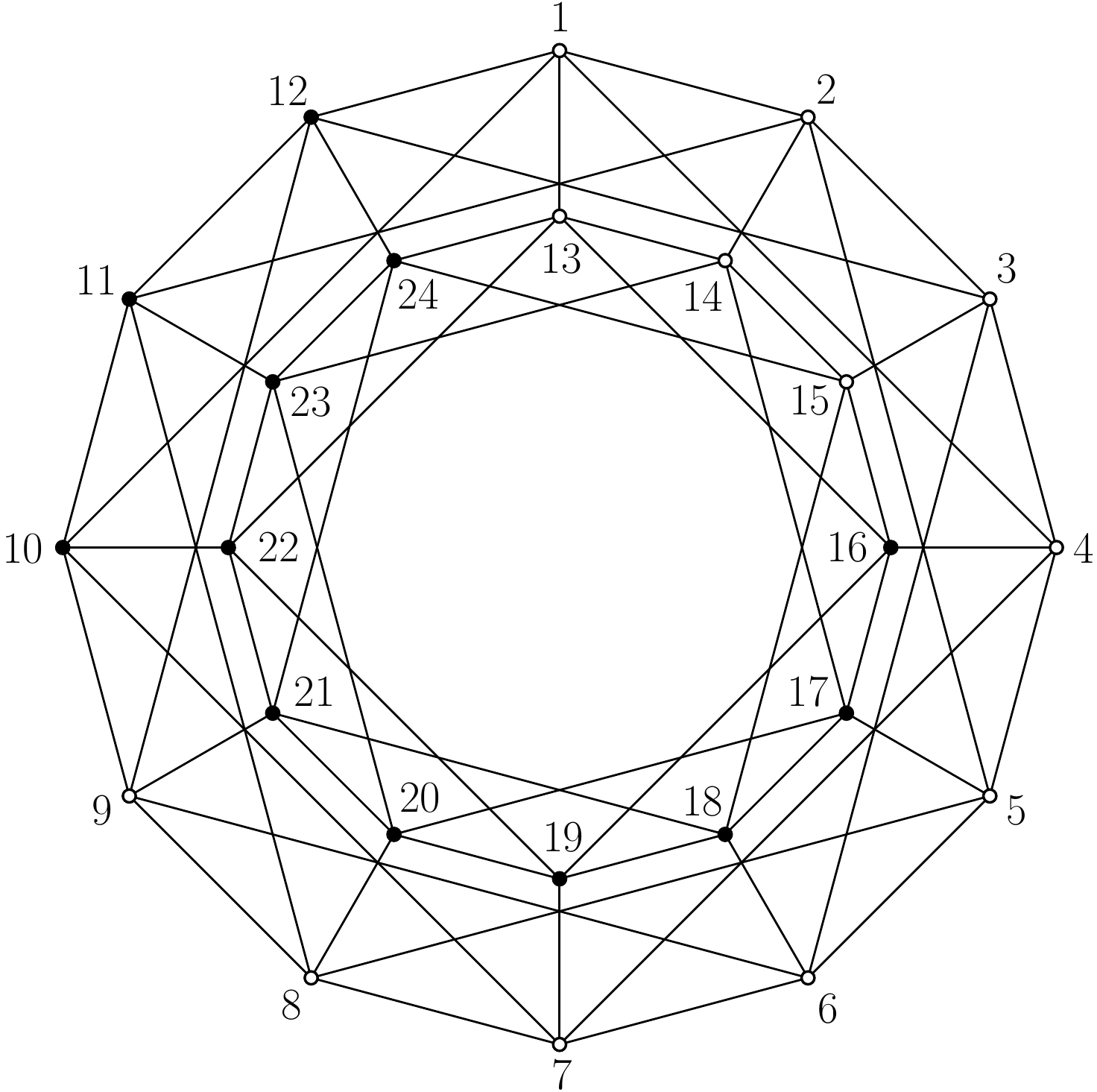}
  \centering
\caption{Extremal labelling of $D_{24}^5$ for the \textit{median} algorithm}\label{fig:ex}
\end{figure}

Another interesting aspect of Theorem~\ref{thm:median2} is that it shows that in (the second item of) Theorem~\ref{thm:lb}, it is crucial that $d$ is a fixed constant (independent of $n$). Indeed, if $d=\Omega(\sqrt{n})$, then  $\tfrac{n}2+\tfrac{(d-1)(d+1)}4\ge (1+\Omega(1))\tfrac{n}2$ and thus the median algorithm achieves a $\tfrac{1+\epsilon}{d}$-approximation, for some $\epsilon>0$. This is impossible when $d$ is a constant, as shown by Theorem~\ref{thm:lb}.

\medskip

The median algorithm is based on finding an (acyclic) orientation of the input graph. Here, we do it by simply orienting the edges from the end with lower ID to the end with higher ID. This costs a single round of communication, with messages of size $\log n$ (since vertices have to send their ID to their neighbors). It follows that in the more restricted $\textsf{CONGEST}(b)$ model, where messages have size at most $b$, our algorithm takes $\tfrac{\log n}b$ rounds (here and in the remainder, we omit floors and ceilings whenever they are not necessary in the discussion). In particular, if only messages of size 1 are allowed, our algorithm takes $\log n$ rounds. 

A natural question is whether this can be improved. We now argue that it cannot be improved in general if the algorithm is based on some orientation in the graph. Consider the case where $G$ contains an isolated edge $uv$ (two adjacent vertices $u,v$ of degree 1 in $G$) and we want to construct some orientation of $G$ (and in particular of the edge $uv$) in the $\textsf{CONGEST}(1)$ model (that is with messages of size 1). It seems that the argument below is not original, but we have not been able to find a written source.

Observe that at each round of communication, the message sent by each of $u,v$ only depends on its ID and the bits received from its neighbor at previous steps. At the first round, at least half of the IDs (call this set $S_1$) would send the same bit, say $b_1$, to their neighbor. At the next round, at least half of the IDs of $S_1$, upon receiving $b_1$, would sent the same bit to their neighbor, say $b_2$. We continue this process by constructing sets $S_i$ and bits $b_i$ for any $1\le i < \log n$ as above (except that for the final round, we define $b_i$ as the bit output by the vertices, instead of the bit sent to the neighbor). If we use less than $\log n$ rounds of communication, we can find two distinct IDs in the last set $S_i$ such that if we assign these IDs to $u$ and $v$, these two vertices will output the same bit $b_i$, and therefore they will not be able to deterministically agree on an orientation of the edge $uv$. Actually the result holds even if randomization is allowed and we want $u$ and $v$ to agree on some orientation of the edge $uv $ with high probability.

\medskip






This remark leads us to a similar result for approximating {\sc MaxCut} in regular graphs. We prove the following:
\begin{thm}\label{thm:congest1} Let $D^d=\left\lbrace D_{2n}^d, n>0 \right\rbrace$ for $d$ odd.
Any deterministic constant factor approximation of {\sc MaxCut} on the class $D^d$ requires at least $(1-o(1))\log n$ rounds in the $\textsf{CONGEST}(1)$ model.
\end{thm}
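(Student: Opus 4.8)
The plan is to adapt the isolated-edge argument from the remark preceding the statement, replacing the Ramsey-type collapse used in Theorem~\ref{thm:lb} by the much cruder pigeonhole that becomes available once messages are single bits — this is exactly why the bound jumps from $\Omega(\log^*n)$ in \textsf{LOCAL} to $\Omega(\log n)$ here. Fix $d$ odd, and recall that $D^d_{2n}$ is bipartite and $d$-regular with exactly $nd$ edges, so its maximum cut has size $nd$. Hence, to contradict the assumption that some algorithm $\texttt{A}$ running in $T$ rounds of $\textsf{CONGEST}(1)$ is a $c$-approximation (for a fixed constant $c>0$), it suffices to exhibit a labelling of $D^d_{2n}$ by distinct IDs on which $\texttt{A}$ leaves more than $(1-c)\,nd$ edges monochromatic. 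We will show that whenever $T=o(\log n)$ the adversary can force $D^d_{2n}$ to split into a few very long monochromatic arcs, leaving only $o(nd)$ edges across arc boundaries in the cut.

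\emph{First step: indistinguishability classes.} After $T$ rounds of $\textsf{CONGEST}(1)$ the output of a vertex $v$ is a function of its own ID together with the at most $dT$ bits it has received. Proceeding round by round exactly as in the isolated-edge remark — at each round grouping the still-undecided IDs according to the bits they would send given the bits already fixed — one partitions the ID universe into at most $2^{O(T)}$ classes, each of which is ``$T$-indistinguishable'' in the following sense: if a sufficiently long arc of $D^d_{2n}$ (a block of consecutive outer vertices $u_a,\dots,u_b$ together with the corresponding inner vertices $v_a,\dots,v_b$) is labelled using only IDs from a single class, then every vertex of that arc at distance more than $T$ from the two ends outputs the same value. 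Indeed all these interior vertices have the same local view up to an automorphism of $D^d_{2n}$: the rotation of the two cycles $C^{d-1}_n$ accounts for different positions along the arc, and the involution exchanging the outer and the inner copy while fixing every matched pair $\{u_i,v_i\}$ accounts for the two cycles — this involution is precisely what forces $u_i$ and $v_i$ onto the same side of the cut, which is crucial here (in Theorem~\ref{thm:lb} the two sides of the matched pair could differ, which is why the ratio there was $\tfrac1d$).

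\emph{Second step: tiling and counting.} Order the classes and assign them to consecutive arcs of $D^d_{2n}$ until the whole graph is exhausted; this decomposes $D^d_{2n}$ into at most $2^{O(T)}$ arcs, each monochromatic except within distance $T$ of its two ends. An edge fails to be monochromatic only if one of its endpoints lies in such a boundary zone, and a boundary zone contains $O(dT)$ vertices, hence meets $O(d^2T)$ edges. So the cut produced by $\texttt{A}$ on this labelling has size $O\!\big(2^{O(T)}\cdot d^2 T\big)$. If $T=o(\log n)$ this is $o(n)$, hence strictly less than $c\cdot nd$ once $n$ is large, contradicting the approximation guarantee; thus $T=\Omega(\log n)$. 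Choosing the ID universe of size $\Theta(n)$ — matching what the median algorithm of Theorem~\ref{thm:median1} needs, so that the lower bound is tight — and tracking the lower-order terms carefully is what upgrades this to the stated $(1-o(1))\log n$.

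\emph{Main obstacle.} The delicate point is getting the constant in front of $\log n$ up to $1$: the naive accounting above only controls $2^{dT}$ distinct behaviours, since a vertex receives $dT$ bits over $T$ rounds, and therefore only yields $T=\Omega(\log n/d)$, i.e.\ a constant depending on $d$. To recover the tight constant one has to exploit that inside an arc labelled by a single indistinguishability class almost all of those received bits are \emph{canonical} — forced by the fixed structure of $D^d_{2n}$ rather than by which ID of the class a vertex actually carries — so that the round-by-round pigeonhole genuinely branches on only $(1+o(1))$ bits of information per round. Making this precise, while simultaneously guaranteeing that the monochromatic arcs remain long enough to swamp the boundary edges, is the crux, and it is where the $o(1)$ slack in the statement originates. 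A secondary point not to be overlooked is the alignment between the outer arc and the inner arc: they must carry the same index range so that every matched pair is interior to a single arc, which is exactly where the outer/inner swap automorphism is invoked.
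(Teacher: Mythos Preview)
Your overall strategy matches the paper's: apply the bit-halving pigeonhole from the isolated-edge remark to build sets of IDs that behave identically, tile $D^d_{2n}$ with arcs labelled from these sets, and argue that only boundary edges can land in the cut. One framing difference is worth noting: the paper does not partition the ID universe into $2^{O(T)}$ classes. It repeatedly \emph{extracts} a single synchronized set --- from the current pool of $\Omega(n)$ IDs it halves $T$ times to obtain a set of size $n^\alpha$ (where $T=(1-\alpha)\log n$), labels one arc of length $n^\alpha$ with it, removes those IDs, and iterates until only an $O(1/a)$-fraction of vertices remain unlabelled. This yields $\Theta(n^{1-\alpha})$ equal-length arcs. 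Your ``partition into classes'' phrasing, read literally, would require a separate argument to control class sizes and is not what the paper runs.

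The genuine gap is the one you yourself flag as the main obstacle: you do not establish the constant $1$ in front of $\log n$, only $\Omega(\log n/d)$. The paper's proof simply performs one halving per round --- it writes that $n/2^{T}$ labels can be chosen so that ``each vertex of $C_1$ outputs the same bit $b_i$'' at round $i$, directly mirroring the degree-$1$ isolated-edge computation and without further discussion of the $d$ ports. In other words, the paper does not supply the ``canonical received bits'' argument you anticipate; it asserts the single halving, relying on all vertices having the same structural view. Your proposal, by contrast, stops at the obstacle and leaves it open, so as written it does not reach $(1-o(1))\log n$. A secondary issue: invoking the inner/outer involution to \emph{force} $u_i$ and $v_i$ onto the same side is not sound as stated, since that involution also swaps their IDs and hence says nothing by itself about the algorithm's output on a fixed labelling; what actually forces equal output (in the paper, and implicitly in your intended argument) is that all IDs in the arc come from one synchronized set, combined with vertex-transitivity of the unlabelled structure --- which is exactly how the paper uses the phrase ``each vertex has the same view''.
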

\begin{proof}
Assume that an algorithm \texttt{A} achieves a $\frac{1}{a}$-approximation ($a>0$) on the class of $d$-regular graphs in at most $(1-\alpha) \log n$ rounds (with $\alpha>0$).

Assume first that we have a bipartite $d$-regular with $m$ edges and consisting of $\ell=n^{1-\alpha}$ connected components of size $k=n^\alpha$, labelled $C_1,C_2,\ldots, C_\ell$, and in which each vertex has the same ``view'' at distance $\log n$ (i.e.\ the balls of radius $\log n$ centered in each of the vertices are isomorphic). We argue, in the same way as above, that we can choose $\tfrac{n}{2^{(1-\alpha)\log n}}=n^{\alpha}=k$ labels such that during $(1-\alpha)\log n$ rounds, every vertex in $C_1$ outputs the same bit. More precisely, for each round $1\le i \le (1-\alpha)\log n$, each vertex of $C_1$ outputs the same bit $b_i$. We proceed similarly for $C_2,\ldots,C_{\ell'}$ with $\ell'=\ell(1-\tfrac1{2a})$: this is possible since before labelling $C_{i}$ ($i\leq \ell'$) there are at least $n-k\ell'=\frac{n}{2a}=\Omega(n)$ available labels.

Hence, we give a labelling of $C_1,\ldots,C_{\ell'}$ such that during $(1-\alpha)\log n$ rounds (and in particular, at the end of the algorithm), all the vertices in each given connected component output the same bit.  In particular  no edge of $C_1,\ldots,C_{\ell'}$ appears in the cut, which implies that at most $\frac{m}{2a}$ edges appear in the cut. Since every component is bipartite, the maximum cut contains $m$ edges, which contradicts the hypothesis that \texttt{A} achieves a $\frac{1}{a}$-approximation.

\medskip 

We then show that this proof can be adapted to the case of $C_n^d$ or $D_{2n}^d$ (depending on the parity of $d$). Partition the graph into sets of vertices that appear consecutively in the clockwise order around the cycle(s): $C_1,\ldots,C_\ell$ (each of size $n^\alpha$). As before, we can label $C_1,\ldots,C_{\ell'}$ such that during $(1-\alpha)\log n$ rounds, the output bit of every vertex of $C_i$ is the same (for every $1\le i\le \ell'$). When \texttt{A} runs on the graph, a small perturbation of the output bit may appear on the boundary of each $C_i$, therefore the output bit of vertices near the boundary is not guaranteed to be the same anymore. However, this does not hurt us since we run the algorithm for $O(\log n)$ rounds therefore the perturbation can reach at most $d\log n$ vertices in every $C_i$. Therefore, in each $C_i$ at least $n^\alpha-O(\log n)$ vertices have the same output during $(1-\alpha)\log n$ rounds (and in particular, the same output at the end of the algorithm). This implies there are at least $\ell'(n^{\alpha}-O(\log n)) = n(1-\tfrac1{2a}) - o(n)$ vertices that have the same output as their neighbors at the end of the algorithm. It follows that the cut output by the algorithm has size at most $\tfrac{m}{2a}+o(m)$, while the maximum cut has size $m$, which contradicts the definition our initial assumption that that \texttt{A} achieves a $\frac{1}{a}$-approximation.
\end{proof}

Note that the randomized, high probability version of Theorem~\ref{thm:congest1} does not hold. More precisely, we can show that the trivial randomized $\frac{1}{2}$-approximation indeed produces a constant factor approximation with high probability on the class of regular graphs in the $\textsf{CONGEST}(0)$ model. This is certainly a classic result but we have not been able to find it in the literature. 
\begin{thm}[folklore]
For any $\epsilon>0$, the folklore algorithm produces a $\frac{1-\epsilon}{2}$-approximation with high probability on the class of $n$-vertex graphs with degrees bounded by a constant, and $m=\Omega(n)=\Omega(1/\epsilon^2)$ edges (and in particular in the class of $d$-regular graphs of sufficiently large size).
\end{thm}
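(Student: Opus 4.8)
The plan is to show that the random variable $X$ equal to the size of the cut produced by the folklore algorithm (each vertex picks its side independently and uniformly at random) is sharply concentrated around its mean $\Exp[X]=m/2$, so that with high probability $X\ge \tfrac{m}{2}-\tfrac{\epsilon m}{2}=\tfrac{(1-\epsilon)m}{2}$. Since the maximum cut has size $\opt\le m$, and we may assume $\epsilon<1$, this immediately gives $X\ge \tfrac{(1-\epsilon)m}{2}\ge \tfrac{1-\epsilon}{2}\,\opt$, which is the claimed approximation ratio. So everything reduces to controlling the lower tail $\Pr[X\le \Exp[X]-\tfrac{\epsilon m}{2}]$.

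Write $X=\sum_{e\in E}X_e$, where $X_e$ is the indicator that the edge $e$ is in the cut; by linearity of expectation $\Exp[X]=m/2$. The one subtlety is that the $X_e$ are not mutually independent, but I would use that they are \emph{pairwise} independent: in a simple graph any two distinct edges $e,f$ share at most one endpoint, and conditioning on the colour of that common endpoint (if any) makes ``$e$ is cut'' and ``$f$ is cut'' depend on disjoint sets of vertices, each still with conditional probability $\tfrac12$, so $\Exp[X_eX_f]=\tfrac14=\Exp[X_e]\Exp[X_f]$ and $\mathrm{Cov}(X_e,X_f)=0$. Hence $\mathrm{Var}(X)=\sum_{e\in E}\mathrm{Var}(X_e)=m/4$.

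It then suffices to apply Chebyshev's inequality with $t=\tfrac{\epsilon m}{2}$:
$$\Pr\Big[X\le \tfrac{m}{2}-\tfrac{\epsilon m}{2}\Big]\le \frac{\mathrm{Var}(X)}{t^2}=\frac{m/4}{\epsilon^2 m^2/4}=\frac{1}{\epsilon^2 m}.$$
Under the hypothesis $m=\Omega(n)=\Omega(1/\epsilon^2)$ this probability tends to $0$ as $n\to\infty$ (and is below any prescribed constant once the hidden constant in $m=\Omega(1/\epsilon^2)$ is large enough), which is precisely what ``with high probability'' requires. As an alternative that does make use of the bounded-degree hypothesis, one can instead apply McDiarmid's bounded differences inequality to the $n$ independent vertex choices: flipping one vertex's side changes $X$ by at most its degree, hence by at most the degree bound $\Delta$, giving $\Pr[X\le \Exp[X]-t]\le \exp(-2t^2/(n\Delta^2))=\exp(-\Omega(\epsilon^2 m^2/(n\Delta^2)))=\exp(-\Omega(\epsilon^2 m))$ by $m=\Omega(n)$, again $o(1)$.

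There is essentially no hard step here — the statement is folklore — and the only point requiring a moment's care is the dependence among the $X_e$, which is handled either by the pairwise-independence observation above (making the bounded-degree hypothesis superfluous for the concentration argument) or by using a concentration inequality tailored to functions of independent variables. A secondary thing to state cleanly is the role of the two conditions: $m=\Omega(n)$ forces $m\to\infty$ with $n$, while $m=\Omega(1/\epsilon^2)$ keeps $\epsilon^2 m$ large, and together they drive the failure probability to $0$.
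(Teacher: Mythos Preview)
Your proof is correct, and both variants (Chebyshev via pairwise independence, or McDiarmid on the vertex variables) are valid and standard. The paper, however, proceeds differently: it invokes Vizing's theorem to partition the edge set into $d+1$ matchings, discards the tiny ones, and then applies a Chernoff bound \emph{within each matching} (where the edge-cut indicators are genuinely mutually independent, since matching edges are vertex-disjoint), finishing with a union bound over the surviving matchings.

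Your approach is more elementary and more direct. The pairwise-independence route is particularly clean: it computes $\mathrm{Var}(X)=m/4$ exactly and never uses the bounded-degree hypothesis, which shows that hypothesis is not actually needed for the concentration (only $m\to\infty$ matters). The McDiarmid variant does use bounded degree but rewards you with an exponentially small failure probability $\exp(-\Omega(\epsilon^2 m))$, stronger than both your Chebyshev bound and the paper's $1-1/n$. The paper's matching decomposition is more elaborate---it needs Vizing, a threshold to prune small matchings, and a Cauchy--Schwarz step to sum the deviations---and what it buys is mutual independence inside each piece so that off-the-shelf Chernoff applies; but as you show, that detour is unnecessary once one notices either the pairwise independence of the $X_e$ or that $X$ is a Lipschitz function of the independent vertex choices.
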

\begin{proof}
  Let $G=(V,E)$ be a graph of maximum degree $d$ containing $m$ edges. By Vizing's Theorem, $G$ has a $(d+1)$-edge-coloring, i.e.\  a partition of its edge-set into $d+1$ matchings $M_1,\ldots,M_{d+1}$. Assume that $|M_1|\ge \cdots \ge |M_{d+1}|$, and discard all the matchings $M_i$ such that $|M_i|\le \epsilon' m/d$, for some $\epsilon'>0$ whose value will be fixed later in the proof. Note that the remaining matchings $M_1,\ldots,M_k$ satisfy $|\bigcup_{i=1}^kM_i|\ge (1-\epsilon')m$. Recall that the folklore algorithm assigns each vertex to one of the two sides of the cut, uniformly at random. Note that for each matching $M_i$, and for any two edges $e,f\in M_i$, the events that $e$ and $f$ are in the cut are independent.

We now recall the following Chernoff bound (see e.g.\ Chapter 5 in~\cite{MR02}): For any $0\le t\le np$, $\Pr(|\mbox{BIN}(n,p)-np|> t)\le 2\exp(-t^2/3np)$, where $\mbox{BIN}(n,p)$ denotes the binomial distribution with parameters $n$ and $p$.
  Thus, for each $1\le i \le k$, it follows that with probability at least $1-n^{-2}$, at least $\tfrac12\,|M_i|-c\sqrt{|M_i|}$ edges of $M_i$ are in the cut output by the algorithm, for some absolute constant $c>0$. Using the Union bound, with probability at least $1-1/n$ the cut output by the algorithm contains at least $$\sum_{i=1}^k (\tfrac12\,|M_i|-c\sqrt{|M_i|})\ge \tfrac{m}2 (1-\epsilon') -c\cdot \sqrt{k} \cdot\sqrt{\sum_{i=1}^k|M_i|}\ge \tfrac{m}2 (1-\epsilon')-c\cdot d\cdot \sqrt{m}=\tfrac{m}2 (1-\epsilon'-\tfrac{2cd}{\sqrt{m}})$$ edges. Setting $\epsilon'=\epsilon-\tfrac{2cd}{\sqrt{m}}$ yields the desired result (recall that $m=\Omega(1/\epsilon^2)$ and thus such an $\epsilon'>0$ exists).
\end{proof}

\subsection{Directed cuts}\label{sec:oricut}

Given a bipartition $(V_1,V_2)$ of an oriented graph $G$, the set of arcs oriented from $V_1$ to $V_2$ (the \emph{directed cut} from $V_1$ to $V_2$) is denoted by $\overrightarrow{E}(V_1,V_2)$. The maximum cardinality of a directed cut in $G$ is denoted by $\mdc(G)$. 

\smallskip

Let $G$ be an oriented graph. For each vertex $v$, we define the \emph{deficit} of $v$ as $\delta(v)=d^+(v)-d^-(v)$, where $d^+(v)$ and $d^-(v)$ denote the out-degree and in-degree of $v$, respectively. We define the \emph{sign} of a vertex $v$ as the sign of $\delta(v)$, and we say that that a vertex is \emph{positive} or \emph{negative} accordingly. The set of positive vertices is denoted by $V^+$ and the set of negative vertices is denoted by $V^-$. Note that if all the vertices of $G$ have odd degree (in particular if $G$ is $d$-regular with $d$ odd), then every vertex is positive or negative and this case $V^+,V^-$ form a bipartition of the vertex set $V$ of $G$.

Note that the median algorithm described in the previous subsection can be rephrased as: find an acyclic orientation of $G$ and then choose the cut $(V^+,V^-)$ with respect to this orientation.
Our second proof of Theorem~\ref{thm:median1} will be a direct consequence of the following general result (which proves that not only the cut, but also the \emph{directed} cut between $V^+$ and $V^-$ has size at least $n/2$, and that the original orientation does not need to be acyclic).

\begin{thm}\label{thm:mediand}
  Let $G$ be an $n$-vertex oriented $d$-regular graph with $d$ odd, and let $V^+$ and $V^-$ be defined as above. Then the directed cut $\overrightarrow{E}(V^+,V^-)$ contains at least $\max\{\tfrac{n}2,\tfrac{2}{d+1/d} \cdot\mdc(G)\}$ arcs.
\end{thm}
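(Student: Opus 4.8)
The plan is to establish the two lower bounds on $|\overrightarrow{E}(V^+,V^-)|$ separately, since they come from quite different arguments.

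\emph{The bound $\tfrac n2$.} For each vertex $v$, the deficit $\delta(v)=d^+(v)-d^-(v)$ satisfies $\delta(v)=d^+(v)-d^-(v)$ with $d^+(v)+d^-(v)=d$ odd, so $\delta(v)$ is odd and hence nonzero; moreover $|\delta(v)|\geq 1$. Summing $\delta(v)$ over all vertices gives $\sum_v \delta(v)=0$ (each arc contributes $+1$ to its tail and $-1$ to its head). First I would count, for each positive vertex $v\in V^+$, how many of its out-arcs land in $V^-$. The key observation is that the arcs inside $G[V^+]$ contribute nothing to $\overrightarrow{E}(V^+,V^-)$, but they do ``absorb'' out-degree; I would like to show that on average each positive vertex still sends at least one arc into $V^-$. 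Concretely, for $v\in V^+$ let $a(v)$ be the number of out-arcs of $v$ with head in $V^-$; then $|\overrightarrow{E}(V^+,V^-)|=\sum_{v\in V^+}a(v)$. Using $\sum_v\delta(v)=0$ restricted cleverly, or rather summing deficits over $V^+$: $\sum_{v\in V^+}\delta(v)=e(V^-,V^+)_{\text{arcs } V^+\to V^-}-e(V^-,V^+)_{\text{arcs }V^-\to V^+}$ after the internal arcs of $G[V^+]$ cancel. Since every $v\in V^+$ has $\delta(v)\geq 1$, we get $|\overrightarrow{E}(V^+,V^-)|\geq \sum_{v\in V^+}\delta(v)\geq |V^+|$, and symmetrically (counting in-arcs at negative vertices) $|\overrightarrow{E}(V^+,V^-)|\geq |V^-|$ — wait, that needs the reverse inequality, so instead I would note $\sum_{v\in V^-}(-\delta(v))\geq |V^-|$ and $-\sum_{v\in V^-}\delta(v)$ also equals (arcs $V^+\to V^-$) minus (arcs $V^-\to V^+$) $=\sum_{v\in V^+}\delta(v)$. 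So in fact $|\overrightarrow{E}(V^+,V^-)|\geq \sum_{v\in V^+}\delta(v)=\tfrac12\sum_v|\delta(v)|\geq \tfrac12|V|=\tfrac n2$, which is exactly the desired bound.

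\emph{The bound $\tfrac{2}{d+1/d}\cdot\mdc(G)$.} For this I would compare $|\overrightarrow{E}(V^+,V^-)|$ with an \emph{arbitrary} directed cut $(W_1,W_2)$. The natural line is: take an optimal directed cut of size $\mdc(G)$; since every arc of $G$ either goes $V^+\to V^-$ (counted) or not, I want an upper bound on $\mdc(G)$ in terms of $n$ and $d$, and then combine with $|\overrightarrow{E}(V^+,V^-)|\geq n/2$. Indeed $\mdc(G)\leq \tfrac{dn}{2}$ trivially (total number of arcs), and more refined: since $|\overrightarrow{E}(V^+,V^-)|\geq \tfrac12\sum_v|\delta(v)|$, and $\mdc(G)\leq m - (\text{something})$... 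The cleanest route is probably: show $\mdc(G)\leq \tfrac12\big(m+|\overrightarrow{E}(V^+,V^-)|\big)\cdot(\text{correction})$ is not quite it; rather, for any cut $(W_1,W_2)$, $|\overrightarrow{E}(W_1,W_2)|\leq \tfrac12 m + \tfrac12\sum_v |\delta(v)|$-type bound fails in general. Instead I expect the right statement is $\mdc(G)\le \tfrac12 dn \cdot \tfrac{?}{?}$ and that one proves $|\overrightarrow{E}(V^+,V^-)|\ge \tfrac2{d+1/d}\mdc(G)$ by writing $\mdc(G)\le \tfrac{d^2+1}{2d}\cdot\tfrac n2 \le \tfrac{d^2+1}{2d}\cdot|\overrightarrow{E}(V^+,V^-)|$; so the crux is the inequality $\mdc(G)\leq \tfrac{d^2+1}{4d}\,n$. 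This should follow because a directed cut of size $s$ forces, by a degree count, that $s\le \tfrac{dn}{2} - \tfrac12\sum_{v}(\text{unavoidable non-cut arcs at }v)$, and each vertex of degree $d$ (odd) contributes at least a $\tfrac12(1-\tfrac1d)\cdot\tfrac{d}{?}$ ... more precisely a vertex with $d^+$ out and $d^-$ in, all out-arcs in the cut and all in-arcs in the cut is impossible, and optimizing gives the $\tfrac{d^2+1}{4d}n$ bound.

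\emph{Main obstacle.} The $n/2$ bound is essentially a bookkeeping exercise with deficits and I am confident in it. The real work is the second bound: pinning down the sharp inequality $\mdc(G)\leq \tfrac{d^2+1}{4d}\,n$ (equivalently that no directed cut is better than a $\tfrac{d^2+1}{4d}$-fraction of the vertices per... ) and verifying it is tight on $\overrightarrow{D^d_{2n}}$ as claimed earlier. I would handle it by a local (per-vertex or per-arc) charging argument: bound the number of cut arcs incident to each vertex using that its degree $d$ is odd so the in/out split is at best $\tfrac{d+1}{2},\tfrac{d-1}{2}$, and sum. The delicate point is combining the contributions without double-counting arcs, which I would resolve by assigning each arc half to its tail and half to its head.
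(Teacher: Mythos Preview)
Your argument for the $n/2$ bound is correct (if a bit scattered): the identity $\sum_{v\in V^+}\delta(v)=|\overrightarrow{E}(V^+,V^-)|-|\overrightarrow{E}(V^-,V^+)|$ together with $\delta(v)\ge 1$ on $V^+$ and $-\delta(v)\ge 1$ on $V^-$ indeed yields $|\overrightarrow{E}(V^+,V^-)|\ge \tfrac12\sum_v|\delta(v)|\ge n/2$. The paper reaches the same conclusion via the auxiliary quantity $D=\sum_{v\in V^+}d^+(v)+\sum_{v\in V^-}d^-(v)$, but the two arguments are equivalent.

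The second part, however, has a genuine gap. Your plan is to prove the absolute bound $\mdc(G)\le \tfrac{d^2+1}{4d}\,n$ and then chain it with $\cut\ge n/2$. But that inequality is \emph{false} in general. Take $G$ bipartite $d$-regular with parts $A,B$ of size $n/2$ and every arc oriented from $A$ to $B$: then $\mdc(G)=dn/2$, which for $d\ge 3$ is strictly larger than $\tfrac{d^2+1}{4d}\,n$. (In this example the oriented median algorithm happens to find the optimum, so the ratio is $1$; the point is that $\cut$ is large precisely because $\opt$ is large, and you cannot separate the two.) No per-vertex charging argument of the type you sketch can rescue this, because the target inequality is simply not true.

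What the paper does instead is compare $\cut$ and $\opt$ \emph{directly} through the disagreement set $M=(V_1\cap V^-)\cup(V_2\cap V^+)$ between the algorithm's cut $(V^+,V^-)$ and a fixed optimal cut $(V_1,V_2)$. Two inequalities fall out: $\cut\ge \opt-\tfrac{d-1}{2}|M|$ (every arc of the optimal cut not incident to $M$ is already in the algorithm's cut, and vertices of $M$ contribute at most $\tfrac{d-1}{2}$ optimal-cut arcs each) and $2\,\opt\le D-|M|$ (a refinement of the double-count you would use for $n/2$). Eliminating $|M|$ between these two, and using $D\le \cut+dn/2\le (d+1)\cut$, gives exactly $\cut/\opt\ge 2d/(d^2+1)$. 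The essential idea you are missing is that the bound must be \emph{relative}: when $\opt$ is large, so is $D$ (and hence $\cut$), and the set $M$ is what quantifies this coupling.
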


\begin{proof}
  We write $\cut=|\overrightarrow{E}(V^+,V^-) |$ and $\opt=\mdc(G)$, and set $D=\sum_{v\in V^+} d^+(v) + \sum_{v\in V^-} d^-(v)\geq n\cdot \frac{d+1}{2}$. Observe that $D$ counts the number of arcs of $G[V^+]$ and $G[V^-]$ once, while the arcs of $\overrightarrow{E}(V^+,V^-)$ are counted twice. Since $G$ contains $dn/2$ arcs, it follows that 
\begin{equation}\label{eq1}
\cut\geq D-\frac{dn}{2} \geq \frac{n}{2}.
\end{equation}

This proves that the directed cut output by the algorithm has size at least $\frac{n}{2}$, which readily implies Theorem~\ref{thm:median1}.

\begin{figure}[htb]
\centering
\includegraphics[scale=1]{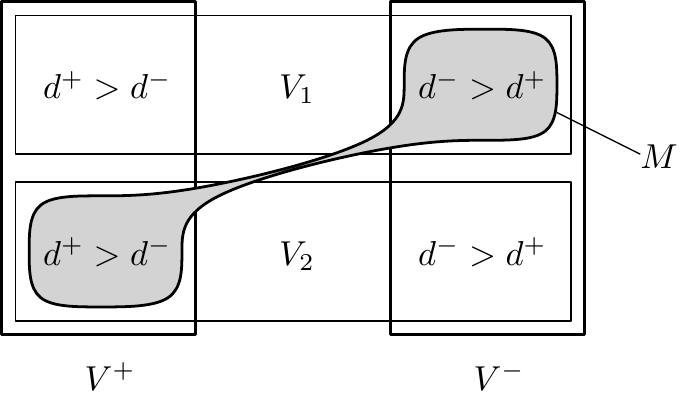}
\caption{Sets $V^+$, $V^-$, $V_1$, $V_2$, and $M$.}
\label{fig:dessincut}
\end{figure} 


\smallskip

We now consider an optimal directed cut $\overrightarrow{E}(V_1,V_2)$ of $G$ (i.e.\ a directed cut of cardinality $\opt=\mdc(G)$), and define $M$ as the set of vertices $(V_1\cap V^-)\cup(V_2\cap V^+)$ (see Figure~\ref{fig:dessincut} for an illustration). Note that each arc of $\overrightarrow{E}(V_1,V_2)$ which is not incident to a vertex of $M$ is also an arc of $\overrightarrow{E}(V^+,V^-)$. Since the vertices of $V_1\cap V^-$ have out-degree at most $\tfrac{d-1}2$ and the vertices of $V_2\cap V^+$ have in-degree at most $\tfrac{d-1}2$, we have
\begin{equation}\label{eq2}
\cut \geq  \opt- \frac{d-1}{2}\cdot |M|.
\end{equation}

Now observe that 

$$2|\overrightarrow{E}(V_1,V_2)| \leq \sum_{v\in V^+\setminus M} d^+(v) + \sum_{v\in V^-\setminus M} d^-(v) + \sum_{v\in V^-\cap M} d^+(v) + \sum_{v\in V^+\cap M} d^-(v)$$ 
$$= D + \sum_{v\in V^-\cap M} (d^+(v)-d^-(v)) + \sum_{v\in V^+\cap M} (d^-(v)-d^+(v) )
\leq D - |M|.$$

This implies
\begin{equation}\label{eq3}
2\cdot\opt \leq D-|M|.
\end{equation}

It follows from (\ref{eq2}) that $|M|\geq \frac{2}{d-1}(\opt-\cut)$, which we can plug into (\ref{eq3}) to obtain:
$$\tfrac{2d}{d-1}\,\opt\leq D+\tfrac{2}{d-1}\,\cut.$$

It directly follows from  (\ref{eq1}) that $D\leq \cut+\frac{dn}{2}\leq (d+1) \,\cut$ and plugging it into the previous inequality, we obtain:

$$\tfrac{2d}{d-1}\,\opt\leq (d+1+\tfrac{2}{d-1})\,\cut=\tfrac{d^2+1}{d-1}\,\cut,$$

and finally:

$$\frac{\cut}{\opt}\geq \frac{2d}{d^2+1}=\frac{2}{d+1/d},$$ as desired.
\end{proof}

From now on, we call the 0-round algorithm resulting from Theorem~\ref{thm:mediand} the \emph{oriented median algorithm}.
The factor $\frac{2}{d+1/d}$ might seem a little surprising, but it turns out to be sharp, in the following sense: there are $d$-regular oriented graphs for which the oriented median algorithm outputs a cut of size precisely $\frac{2}{d+1/d}\opt$. To see this, take $n$ to be a multiple of $4d$, and take 4 sets of vertices $A,B,C,D$ as in Figure~\ref{fig:abcd}. Each set is an independent set, and its size is $n$ times the fraction indicated in the figure (for instance $A$ and $B$ both contain $\tfrac{d+1}{4d}\cdot n$ vertices). The arc labelled $\tfrac12$ between $A$ and $B$ indicates that we add $\tfrac12\cdot n$ arcs joining $A$ to $B$, and similarly for the arcs joining $A$ and $D$, and the arcs joining $B$ and $C$). It can be checked that the number of arcs incident to each set is precisely $d$ times the size of the set, so the graph can be made $d$-regular (and we can make sure that the out-degree of each vertex is equal to the average out-degree of its part, for instance the vertices of $A$ have out-degree $\tfrac{4d}{d+1}\cdot (\tfrac{(d-1)^2}{8d}+\tfrac12)=\tfrac{d+1}2$, so they lie in $V^+$). It can also be checked that the directed cut output by the algorithm, $\overrightarrow{E}(A\cup D,B\cup C)$, has cardinality $n/2$ (the arcs joining $A$ to $B$), while the optimal directed cut $\overrightarrow{E}(A\cup C,B\cup D)$ contains $\tfrac{d^2+1}{2d}\cdot \tfrac{n}2=\tfrac{d+1/d}2\cdot \tfrac{n}2$ arcs.

\begin{figure}[htb]
\centering
\includegraphics[scale=0.8]{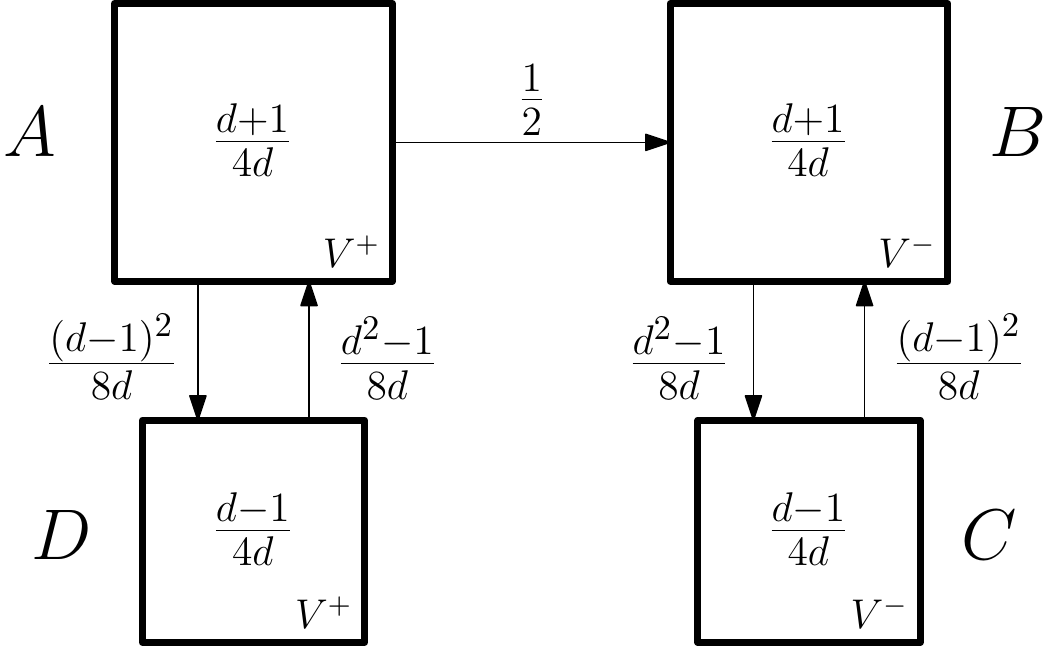}
\caption{An example showing the sharpness of the analysis of Theorem~\ref{thm:mediand}}
\label{fig:abcd}
\end{figure} 

So the problem does not come from the analysis of the algorithm, but rather from the algorithm itself, which is unable to detect the kind of structure depicted in Figure~\ref{fig:abcd}.
\bigskip

To overcome this issue and close the gap with the $\frac{2}{d}$ bound, one might be tempted to consider local improvements. In the following, a vertex will be \textit{stable} if it has at least one neighbor on the other side of the cut. Otherwise it will be \textit{unstable}. We now consider the following simple algorithm: at every round, every unstable vertex changes side. The algorithm stops when all vertices are stable. As we can see, the running time (and even the termination) of this algorithm is highly dependent on the starting point: for instance if all vertices start on the same side of the cut, then the algorithm never ends. When we perform one round of this algorithm, we say that we perform a \textit{flip} (as this algorithm can be seen as a variant of the well known FLIP algorithm that is further discussed in the conclusion). 

\bigskip

Even though this algorithm may never end, we will prove shortly that if we take as starting point the cut given by the oriented median algorithm (that gives a $\frac{2}{d+1/d}$-approximation in 0 rounds) and perform $2$ flips, we then improve slightly on the approximation ratio of $\frac{2}{d+1/d}$.

We first explain some useful properties of stability, as defined above.

\begin{cla}
Once a vertex is stable, it remains stable after any number of flips.
\end{cla}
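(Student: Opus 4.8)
The plan is to prove the statement by induction on the number of flips, which reduces matters to the following one-step assertion: if a vertex $v$ is stable at the beginning of a round, then it is still stable at the end of that round. So assume $v$ is stable before the flip, and pick a neighbor $u$ of $v$ that lies on the side of the cut opposite to $v$; such a $u$ exists by the definition of stability.

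The crucial point is that this witnessing edge certifies the stability of \emph{both} of its endpoints. Indeed, since $v$ and $u$ are on opposite sides, $u$ also has a neighbor on the side opposite to its own (namely $v$), so $u$ is stable as well. Hence neither $u$ nor $v$ is unstable, and since a flip only moves unstable vertices, neither $u$ nor $v$ changes side during this round. Therefore $u$ and $v$ remain on opposite sides after the flip, so $v$ is again stable, which establishes the one-step assertion.

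Chaining this over consecutive rounds — formally, by induction on the number $k$ of flips, with the base case $k=0$ being trivial — shows that once $v$ is stable it remains stable after any number of flips. The argument is entirely elementary, and there is no real obstacle; the only subtlety worth noting is that the neighbor $u$ serving as a stability witness for $v$ may change from one round to the next, but the symmetric argument above produces a valid witness at every round, which is all that is needed.
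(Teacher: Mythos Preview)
Your proof is correct and follows essentially the same approach as the paper: both argue that a stability witness $u$ for $v$ is itself stable (with $v$ as its witness), so neither vertex moves during a flip and they remain on opposite sides. Your version is more carefully spelled out, with the explicit induction framing, but the key idea is identical.
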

\begin{proof}
Simply notice that if a vertex $u$ is stable, then it has a neighbor $v$ on the other side of the cut and $v$ must be stable too. If $u$ or $v$ become unstable then one of them becomes unstable while the second one is on the other side of the cut, which is impossible.
\end{proof}
\begin{cla}\label{cla:inc}
  Once an edge is in the directed cut, it remains in the directed cut after any number of flips.
\end{cla}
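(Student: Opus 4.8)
\textbf{Proof plan for Claim~\ref{cla:inc}.} The plan is to reduce this immediately to the preceding claim on persistence of stability. Suppose that at some round $t$ the arc $a=\overrightarrow{uv}$ (with tail $u$ and head $v$) lies in the directed cut, which by definition means $u$ is on the left side and $v$ is on the right side of the cut at round $t$. Since $u$ and $v$ are adjacent and lie on opposite sides, each of them has a neighbor on the other side of the cut, so both $u$ and $v$ are \emph{stable} at round $t$.

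Now I would invoke the previous claim: once a vertex is stable it remains stable after any number of flips, so $u$ and $v$ are stable in every round $t'\ge t$. By the definition of a flip, only unstable vertices change side; hence $u$ never changes side again and stays on the left, and $v$ never changes side again and stays on the right, for every round $t'\ge t$. Consequently the arc $\overrightarrow{uv}$ still has its tail on the left side and its head on the right side at round $t'$, i.e.\ it remains in the directed cut. This proves the claim.

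The only thing to be careful about is the direction convention: ``$a$ is in the directed cut'' constrains the tail and head of $a$ to specific (opposite) sides, and we must check that the flip dynamics keeps each of the two endpoints fixed on its own side, not merely on opposite sides of each other. This follows at once from stability persistence plus the fact that stable vertices are never flipped, so there is essentially no obstacle here; the claim is a direct corollary of the previous one.
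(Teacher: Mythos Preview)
Your proof is correct and follows essentially the same approach as the paper's own proof: both observe that an arc in the directed cut has both endpoints stable, invoke the preceding claim on persistence of stability, and conclude that neither endpoint is ever flipped. Your version is just a more detailed expansion of the paper's one-sentence argument.
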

\begin{proof}
Any edge in the directed cut joins two stable vertices, and thus remains in the cut after any number of  flips.
\end{proof}

We define $\cut_i$ to be the number of edges in the directed cut  after $i$ flips. We take the notation defined in the proof of Theorem~\ref{thm:mediand}: $V^+$ and $V^-$ are the sets of vertices of positive and negative deficit, respectively, $\cut_0=\cut=|\overrightarrow{E}(V^+,V^-)|$ is the size of the dicut given by the oriented median algorithm (running in 0 rounds), $\opt=\mdc(G)$ is the size of the maximum dicut, $D=\sum_{v\in V^+} d^+(v) + \sum_{v\in V^-} d^-(v)$, and $M$ is the set of vertices whose side differ in $\overrightarrow{E}(V^+,V^-)$ and in some fixed maximum dicut $(V_1,V_2)$.

By Claim \ref{cla:inc}, we have that $\cut_j\geq \cut_i$ for any $j\ge i$. Using this, a simple modification of the proof of Theorem~\ref{thm:mediand} shows the following:

\begin{cla}\label{cla:formula}
Assume we have the following inequalities for some $\alpha,\beta \in [0,1]$:
$$\cut_j\geq \opt - \frac{d-1}{2}\cdot |M| + \alpha\cdot |M|$$
$$2\cdot \opt\leq D-|M| - \beta \cdot |M|$$
then $$\frac{\cut_j}{\opt}\geq f_d(\alpha,\beta):=\frac{d-2\alpha+\beta}{d^2/2 -\alpha\cdot (d+1) +\beta+1/2}$$
\end{cla}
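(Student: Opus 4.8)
The plan is to mimic the two-inequality computation already carried out in the proof of Theorem~\ref{thm:mediand}, but keeping track of the two extra error terms $\alpha|M|$ and $\beta|M|$, and then optimizing out $|M|$ exactly as before. So I would start from the two hypotheses
\begin{equation*}
\cut_j\geq \opt - \tfrac{d-1}{2}|M| + \alpha|M| \quad\text{and}\quad 2\opt\leq D-|M|-\beta|M|.
\end{equation*}
From the first one I isolate $|M|$: since $\tfrac{d-1}2-\alpha>0$ (as $\alpha\le 1\le\tfrac{d-1}2$ for the relevant $d$, or more simply one just needs the coefficient positive, which it is since $d$ is odd and at least $3$), we get
\begin{equation*}
|M|\geq \frac{\opt-\cut_j}{\tfrac{d-1}2-\alpha} = \frac{2(\opt-\cut_j)}{d-1-2\alpha}.
\end{equation*}

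Next I substitute this lower bound on $|M|$ into the second hypothesis. The coefficient of $|M|$ there is $-(1+\beta)<0$, so plugging in a lower bound for $|M|$ gives a valid upper bound:
\begin{equation*}
2\opt\leq D-(1+\beta)\cdot\frac{2(\opt-\cut_j)}{d-1-2\alpha}.
\end{equation*}
Then, exactly as in the original proof, I use the inequality $D\leq (d+1)\cut_0\le (d+1)\cut_j$ coming from \eqref{eq1} and Claim~\ref{cla:inc} (note $\cut_j\ge\cut_0$), to replace $D$ by $(d+1)\cut_j$. After that everything is linear in $\opt$ and $\cut_j$: collecting the $\cut_j$ terms on one side and the $\opt$ terms on the other and clearing the denominator $d-1-2\alpha$ yields an inequality of the form $(\text{stuff})\cdot\cut_j\geq (\text{stuff})\cdot\opt$, from which $\cut_j/\opt\geq f_d(\alpha,\beta)$ drops out after simplification. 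A quick sanity check: setting $\alpha=\beta=0$ should recover $\tfrac{2d}{d^2+1}=\tfrac{2}{d+1/d}$ from Theorem~\ref{thm:mediand}, and indeed $f_d(0,0)=\frac{d}{d^2/2+1/2}=\frac{2d}{d^2+1}$, which matches; this is the main consistency check I would run to make sure the algebra was done correctly.

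The only genuinely delicate point — and hence the ``main obstacle'' — is not the algebra but making sure the direction of every substitution is legitimate: one is chaining lower bounds and upper bounds, so I must check that the coefficient of $|M|$ has the expected sign at each step ($\tfrac{d-1}2-\alpha>0$ so that a lower bound on $\cut_j$ gives a lower bound on $|M|$; $1+\beta>0$ so that a lower bound on $|M|$ gives an upper bound in the second inequality), and that $D\le(d+1)\cut_j$ rather than $(d+1)\cut_0$ is the version one wants (valid by Claim~\ref{cla:inc}). I would also note explicitly that $d-1-2\alpha>0$ for the claimed range $\alpha\in[0,1]$ and $d\ge 3$ odd, so that no division by a nonpositive quantity occurs and the final fraction $f_d(\alpha,\beta)$ is well-defined and positive. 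Once those sign conditions are verified, the proof is a completely routine one-variable elimination, so I would present it compactly without belaboring the intermediate manipulations.
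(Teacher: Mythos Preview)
Your proposal is correct and is precisely the ``simple modification of the proof of Theorem~\ref{thm:mediand}'' that the paper alludes to without writing out: isolate $|M|$ from the first hypothesis, substitute into the second, and replace $D$ by $(d+1)\cut_j$ via \eqref{eq1} and Claim~\ref{cla:inc}. The only nitpick is that for $d=3$ and $\alpha=1$ one has $d-1-2\alpha=0$ rather than $>0$, but in that boundary case the first hypothesis reads $\cut_j\ge\opt$ and $f_3(1,\beta)=1$, so the conclusion is immediate.
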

The following claim immediately holds as well:
\begin{cla}\label{cla:f_inc}
For any, $d\geq 3$, $\alpha,\beta \in [0,1]$ such that $\alpha+\beta>0$, $f_d(\alpha,\beta)>f_d(0,0)=\frac{2}{d+1/d}$.

More precisely, for any $y\in (0,1)$, $$\inf_{\alpha,\beta \in [0,1],\alpha+\beta \geq y} f_3(\alpha,\beta)=f_3(0,y)=\frac{3+y}{5+y}$$ and for any $d\geq 5$, 
$$\inf_{\alpha,\beta \in [0,1],\alpha+\beta \geq y} f_d(\alpha,\beta)=f_d(y,0)=\frac{d-2y}{d^2/2-y(d+1)+1/2}$$
\end{cla}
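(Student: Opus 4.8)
The plan is to verify the two parts separately, both reducing to elementary one-variable calculus on the rational function $f_d(\alpha,\beta)$ defined in Claim~\ref{cla:formula}. First I would record the two partial derivatives of $f_d$ with respect to $\alpha$ and $\beta$. Writing $f_d=N/E$ with $N=d-2\alpha+\beta$ and $E=d^2/2-\alpha(d+1)+\beta+1/2$, one has $\partial_\alpha f_d=(N_\alpha E-N E_\alpha)/E^2=(-2E+(d+1)N)/E^2$ and $\partial_\beta f_d=(N_\beta E-N E_\beta)/E^2=(E-N)/E^2$. Since $E>0$ throughout the box $[0,1]^2$ (for $d\ge 3$ the constant term $d^2/2+1/2$ already dominates $(d+1)$), the signs of these derivatives are governed by the numerators $(d+1)N-2E$ and $E-N$ respectively. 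A direct computation gives $E-N=d^2/2-(d-1)\alpha-1/2-(d-1)=\tfrac12(d-1)(d-2)-(d-1)\alpha\ge 0$ for $\alpha\le \tfrac{d-2}2$, which holds on $[0,1]$ once $d\ge 4$; so $f_d$ is nondecreasing in $\beta$, and for $d=3$ one checks $E-N=\tfrac12(d-1)(d-2)-(d-1)\alpha=1-2\alpha$, so it can change sign — this is exactly why the $d=3$ and $d\ge 5$ cases split. Similarly $(d+1)N-2E=(d+1)(d-2\alpha+\beta)-(d^2+1)+2(d+1)\alpha-2\beta-1=\dots$, which I would simplify to see it is positive on the box, giving monotonicity in $\alpha$ as well; plugging $(\alpha,\beta)=(1,1)$ or $(0,0)$ will pin down the first (strict) inequality $f_d(\alpha,\beta)>f_d(0,0)$ whenever $\alpha+\beta>0$.

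For the refined statement, I would argue that on the closed region $R_y=\{(\alpha,\beta)\in[0,1]^2:\alpha+\beta\ge y\}$ the infimum is attained on the line segment $\alpha+\beta=y$ (since $f_d$ is nondecreasing in each of $\alpha$ and $\beta$ separately when $d\ge 5$, moving toward that segment only decreases $f_d$; and for $d=3$ the same holds in the $\beta$ direction everywhere, while in the $\alpha$ direction one must be slightly more careful but the conclusion survives because along $\alpha+\beta=y$ the function is already monotone — see below). On that segment, substituting $\beta=y-\alpha$ turns $f_d$ into a single-variable rational function $g(\alpha)=(d-3\alpha+y)/(d^2/2-(d+2)\alpha+y+1/2)$ on the interval $\alpha\in[\max(0,y-1),\min(1,y)]$. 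Its derivative has numerator $-3E_{\text{seg}}+(d+2)N_{\text{seg}}$, a constant in $\alpha$; I would evaluate its sign for $d=3$ versus $d\ge 5$. For $d\ge 5$ it is negative, so $g$ is decreasing and its minimum is at the largest admissible $\alpha$, namely $\alpha=\min(1,y)=y$ (as $y\in(0,1)$), giving $\beta=0$ and the stated value $f_d(y,0)=\tfrac{d-2y}{d^2/2-y(d+1)+1/2}$. For $d=3$ the sign flips, $g$ is increasing, so the minimum is at the smallest admissible $\alpha$, namely $\alpha=\max(0,y-1)=0$, giving $\beta=y$ and $f_3(0,y)$, which simplifies to $\tfrac{3+y}{5+y}$ after substituting $d=3$ into the formula for $f_d$.

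The main obstacle I anticipate is not any single computation but keeping the sign bookkeeping consistent across the two parameter regimes: the quantities $E-N$ and the segment-derivative numerator both change sign exactly at $d=3$ versus $d\ge 5$ (with $d=4$, being even, irrelevant since $d$ is odd here), and one has to make sure the "reduce to the boundary segment" step is legitimate in the $d=3$ case where $f_3$ is not globally monotone in $\alpha$ on all of $[0,1]^2$. I would handle this by noting that we only ever need monotonicity \emph{toward} the segment $\alpha+\beta=y$ from the region $\alpha+\beta>y$: there $\beta>y-\alpha\ge 0$ is possible, and decreasing $\beta$ (holding $\alpha$) decreases $f_3$ since $\partial_\beta f_3\ge 0$ requires $E-N=1-2\alpha\ge 0$, i.e. $\alpha\le 1/2$ — for $\alpha>1/2$ one instead decreases $\alpha$ first (legitimate once we check $\partial_\alpha f_3\ge 0$ there), so in all cases we can march monotonically down to the segment, after which the single-variable analysis finishes the job. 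Everything else — the positivity of $E$, the final simplifications $f_3(0,y)=\tfrac{3+y}{5+y}$ and the general-$d$ formula — is routine substitution.
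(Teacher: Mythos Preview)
Your approach is the same as the paper's: compute the two partial derivatives of $f_d$, use their nonnegativity to reduce the infimum to the segment $\alpha+\beta=y$, then analyse the one-variable restriction there. The segment analysis you sketch is correct and matches the paper's computation (the derivative along the segment has numerator proportional to $(d-1)(2y-d+3)$, which is positive for $d=3$ and negative for $d\ge 5$).

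However, you have an arithmetic slip in simplifying $E-N$. Your first expression $d^2/2-(d-1)\alpha-1/2-(d-1)$ is correct, but it equals $\tfrac12(d-1)^2-(d-1)\alpha$, not $\tfrac12(d-1)(d-2)-(d-1)\alpha$ (check the constant: $d^2/2-1/2-(d-1)=(d-1)^2/2$). With the correct formula, for $d=3$ one gets $E-N=2-2\alpha\ge 0$ on the whole box, so $\partial_\beta f_3\ge 0$ everywhere and the sign change you worry about does not occur. In other words, \emph{both} partials are nonnegative for every odd $d\ge 3$, which is exactly what the paper records (the paper's displayed $\partial_\beta f_d$ has an apparent typo $d-2\alpha+1$ for $d-2\alpha-1$, but the positivity conclusion is unaffected). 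Your elaborate workaround for $d=3$ --- decreasing $\alpha$ first when $\alpha>1/2$ --- is therefore unnecessary, though it is not wrong. Also, the numerator $(d+1)N-2E$ that you leave as ``$\dots$'' simplifies cleanly to $(d-1)(\beta+1)>0$, which is what makes $\partial_\alpha f_d>0$ uniformly; once you have that and the corrected $\partial_\beta f_d\ge 0$, the reduction to the segment is immediate and the split between $d=3$ and $d\ge 5$ only enters at the last, one-variable step.
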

\begin{proof}
Notice that, for any $d\geq 3$, $\alpha,\beta \in (0,1)$, $\frac{\partial}{\partial \alpha} f_d(\alpha,\beta)=\frac{4(d-1)(\beta+1)}{(d^2-2\alpha\cdot (d+1)+2\beta + 1)^2}>0$ and $\frac{\partial}{\partial \beta} f_d(\alpha,\beta)=\frac{2(d-1)(d-2\alpha+1)}{(d^2-2\alpha\cdot (d+1)+2\beta + 1)^2}>0$. This immediately proves the first claim.

For the second claim, by previous calculations we can set $\beta = y-\alpha$ and compute:
$$\frac{d}{d\alpha}f_d(\alpha,y-\alpha) = -\frac{2(d-1)(d-2y-3)}{(d^2-2\alpha(d+2)+2y+1)^2}$$
Clearly, if $d>3$, then the minimum is reached for $\alpha=y$ and $\beta=0$. And if $d=3$ then the minimum is reached for $\alpha=0$ and $\beta=y$.
\end{proof}

Knowing these claims, we now prove that $\cut_2/\opt$ is greater than some $f_d(\alpha,\beta)$ with $\alpha+\beta>0$. To show this, we need to prove refined versions of inequalities \eqref{eq2} and \eqref{eq3}.

Recall the proof of inequality  (\ref{eq2}) ($\cut_0\geq \opt - \frac{d-1}{2}\cdot |M|$): Start with some optimum cut and remove the edges of the cut leaving $M\cap V^-$ and the edges entering  $M\cap V^+$. By definition we remove at most $\frac{d-1}{2}\cdot |M|$ edges, which implies (\ref{eq2}).

\smallskip

Let $E_0$ be the set of edges with one end in $V^-$ and the other in $M\cap V^+$, or with one end in $M\cap V^-$ and the other in $V^+$, or between two vertices of $M\cap V^+$, or between two vertices of $M\cap V^+$. We claim that 
\begin{equation}\label{eq2bis}
\cut_0 \geq  \opt- \frac{d-1}{2}\cdot |M|+|E_0|.
\end{equation}

To see this, observe that the only edges of $E_0$ that are in the optimum cut are the edges going from $M\cap V^-$ to $M\cap V^+$, and these are counted twice on our computation. The remaining edges of $E_0$ are counted once in $\frac{d-1}{2}\cdot |M|$ or $\cut_0 $ but not in $\opt$.

\medskip

We denote the stable and unstable vertices (with respect to $\cut_0$) by $S_0$ and $U_0$, respectively.


Let $E_1$ be the set of edges directed from $V^+\cap S_0$ to $V^+\cap U_0$, or from  $V^-\cap U_0$ to $V^-\cap S_0$. Observe that each such edge is added to the cut after one flip and thus
\begin{equation}\label{eq2ter}
\cut_1 \geq  \opt- \frac{d-1}{2}\cdot |M|+|E_0|+|E_1|.
\end{equation}

Let $U_1$ be the set of vertices that are unstable after 1 flip. Consider by symmetry the subset $U_1^r$ of $U_1$ that lie on the right side of the cut. These vertices were in $V^+$ (i.e.\ on the left side of the cut) before the first flip. Since each vertex of $V^+$ has deficit at least 1, the sum of the deficits of the vertices of $U_1^r$ is at least $|U_1^r|$ and thus the number of edges leaving $U_1^r$ is at least $|U_1^r|$. After one flip, by definition of $U_1$, all these edges are directed toward stable vertices on the right side of the cut. It follows that after a second flip, all these edges join the cut, and thus
\begin{equation}\label{eq2q}
\cut_2 \geq  \opt- \frac{d-1}{2}\cdot |M|+|E_0|+|E_1|+|U_1|.
\end{equation}

We now focus on finding a refined version of inequality  (\ref{eq3}) (which states that $2\cdot\opt \leq D-|M|$). Let $F_0$ be the set of edges between two vertices of $V^+\setminus M$, or between two vertices of $V^-\setminus M$. Note that each edge of $F_0$ is counted in $D$ but does not appear in $\opt$, thus we obtain
\begin{equation}\label{eq3bis}
2\cdot\opt \leq D-|M|-|F_0|.
\end{equation}

If we denote by $M^*$ the set of vertices of $M$ with deficit larger than 1 in absolute value (in other words, with deficit at least 3 or at most $-3$), the two main inequalities can be slightly refined as
\begin{equation}\label{eq3ter}
2\cdot\opt \leq D-|M|-|F_0|-|M^*|.
\end{equation}
\begin{equation}\label{eq2c}
\cut_2 \geq  \opt- \frac{d-1}{2}\cdot |M|+|E_0|+|E_1|+|U_1|+|M^*|.
\end{equation}

We are now ready to prove the following.

\begin{thm}\label{thm:mediand_flip}
Assume that $d\geq 3$ is odd. Then the 2-round algorithm consisting of the oriented median algorithm followed by two flips provides a $\frac{2}{d+1/d-3/d^2+O(d^{-3})}$-approximation for the {\sc MaxDiCut} problem in $d$-regular graphs.
\end{thm}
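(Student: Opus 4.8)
The plan is to combine the refined inequalities \eqref{eq3ter} and \eqref{eq2c} with the function $f_d(\alpha,\beta)$ of Claim~\ref{cla:formula}, and to argue that one can always choose a nontrivial lower bound on $\alpha+\beta$ (where $\alpha\cdot|M|$ is the total gain in the ``$\cut$'' inequality beyond $\opt-\tfrac{d-1}2|M|$, i.e.\ $\alpha|M|=|E_0|+|E_1|+|U_1|+|M^*|$, and $\beta\cdot|M|=|F_0|+|M^*|$). The key observation is that $M$ cannot be simultaneously small (compared to the graph) \emph{and} produce tiny values of all the sets $E_0,E_1,U_1,M^*,F_0$: either $M$ is large, in which case $\opt$ is pinned down fairly precisely by \eqref{eq3} and the straightforward bound $\cut_0\ge n/2$ already beats $\tfrac2{d+1/d}$; or $M$ is small, in which case we must lower-bound the ``bonus'' sets in terms of $|M|$ itself.

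Concretely, I would first isolate the regime where $|M|\le \gamma n$ for a suitable small $\gamma=\Theta(d^{-2})$. In that regime one shows that a constant fraction (independent of $d$, or more precisely an $\Omega(1/d)$-fraction suffices) of the vertices of $M$ contribute to one of $E_0\cup E_1\cup U_1\cup M^*$: a vertex $v\in M\cap V^+$ has in-degree $\le\tfrac{d-1}2$, so it has out-degree $\ge\tfrac{d+1}2$, hence at least $\tfrac{d+1}2$ outgoing edges; after the median cut these edges either land in $V^+$ (contributing to $|E_0|$ when the other end is also in $M$, or otherwise creating stable/unstable structure feeding $E_1$ and $U_1$ after the flips), or land in $V^-=$ the other side and are already in $\cut_0$ — but if \emph{all} of them were already in the cut then $v$ would not be in $M$ at all. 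A short case analysis on where the out-neighbours of $v$ sit (in $S_0$ vs.\ $U_0$, in $M$ vs.\ not) shows $|E_0|+|E_1|+|U_1|\ge c\,|M|$ for an absolute constant $c>0$, and symmetrically for $V^-$; combined with the $|M^*|$ term this yields $\alpha\ge c$, hence $\alpha+\beta\ge c$ in this regime. Plugging $\alpha+\beta\ge c$ into Claim~\ref{cla:f_inc} (the $d\ge5$ branch, $f_d(y,0)=\tfrac{d-2y}{d^2/2-y(d+1)+1/2}$ evaluated at $y=c$) gives $\cut_2/\opt \ge \tfrac{d-2c}{d^2/2-c(d+1)+1/2} = \tfrac{2}{d+1/d-2c/d+O(d^{-2})}$. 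To sharpen the $d^{-2}$ coefficient to exactly $3/d^2$ (rather than merely $\Omega(d^{-2})$), I would track the constant more carefully: the worst case is when each vertex of $M$ has deficit exactly $1$ and contributes exactly one ``bonus'' edge, and an Eulerian / degree-counting argument pins the leading bonus at $|E_0|+|E_1|+|U_1|+|F_0|\ge (1+o(1))|M|$, giving $\alpha+\beta\ge 1-o(1)$ effectively, from which $f_d(y,0)$ at $y=1$ expands as $\tfrac{2}{d+1/d-3/d^2+O(d^{-3})}$.

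For the complementary regime $|M|>\gamma n$: from \eqref{eq3} we get $2\opt\le D-|M|\le D-\gamma n$, while $D\le (d+1)\cut_0$ and $\cut_0\ge n/2$ give $D\le \tfrac{dn}2+\cut_0$, so $2\opt\le \tfrac{dn}2+\cut_0-\gamma n$; rearranging with $\cut_2\ge\cut_0\ge n/2$ and $\opt\le \tfrac{dn}4$ (the optimal dicut has at most half the $\tfrac{dn}2$ arcs, or one can just use $\opt\le\tfrac{dn}2$) yields $\cut_2/\opt \ge \tfrac2{d+1/d} + \Omega(\gamma/d)$, which for $\gamma=\Theta(d^{-2})$ is again $\tfrac2{d+1/d}+\Omega(d^{-3})$ — in fact easily enough to absorb into the stated bound. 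I would then take the minimum over the two regimes.

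The main obstacle I expect is the book-keeping in the small-$M$ regime: one has to make sure that the edges counted in $|E_0|,|E_1|,|U_1|,|F_0|,|M^*|$ are genuinely \emph{disjoint} (so that their cardinalities add rather than overlap), and that the out-edges of a vertex $v\in M\cap V^+$ that fail to be in $\cut_0$ are correctly apportioned among these sets without double-counting across different vertices $v$. Getting the leading constant in $|E_0|+|E_1|+|U_1|+|F_0|\ge(1-o(1))|M|$ exactly right — as opposed to some unspecified absolute constant — is what forces the precise $3/d^2$ term and the $O(d^{-3})$ error; a cruder disjointness argument still gives a $\tfrac{2}{d+1/d}+\Omega(d^{-2})$-approximation, which already separates the $2$-flip algorithm from the $0$-round one, but the sharp statement needs the careful count.
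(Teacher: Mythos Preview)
Your decomposition by the size of $|M|$ relative to $n$ does not work, and the large-$|M|$ regime is where it breaks. From \eqref{eq3} and $D\le \cut_0+\tfrac{dn}{2}$ you correctly get $2\,\opt\le \cut_0+\tfrac{dn}{2}-\gamma n$; combining this with $n\le 2\,\cut_0$ gives only $\cut_0/\opt\ge \tfrac{2}{d+1-2\gamma}$, which for $\gamma=\Theta(d^{-2})$ is \emph{strictly worse} than the baseline $\tfrac{2}{d+1/d}$, not better. (The side remark that $\opt\le \tfrac{dn}{4}$ is false in general: a $d$-regular bipartite graph with all arcs crossing one way has $\opt=\tfrac{dn}{2}$.) The point is that the machinery of Claim~\ref{cla:formula} needs bonuses proportional to $|M|$, and an additive bonus of order $\gamma n$ in \eqref{eq3} does not substitute for inequality~\eqref{eq2}; without \eqref{eq2} you cannot even recover $\tfrac{2}{d+1/d}$, and with \eqref{eq2} the extra constraint $|M|\ge\gamma n$ is simply not the binding one. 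Separately, your asymptotic for $f_d(1,0)$ is off: one checks $f_d(1,0)=\tfrac{2(d-2)}{d^2-2d-1}=\tfrac{2}{d-1/d+O(d^{-2})}$, not $\tfrac{2}{d+1/d-3/d^2+\cdots}$; the stated bound actually corresponds to $y\approx \tfrac{3}{2d}$, far from $1$.

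The paper's proof avoids comparing $|M|$ to $n$ altogether. It defines $M_1\subseteq M$ as those vertices of deficit exactly $\pm 1$ that are incident to no edge of $E_0\cup E_1$, and splits on whether $|M_1|\ge x|M|$ with $x=\tfrac{d^2+d}{d^2+4d+1}$. If $|M_1|<x|M|$ then by definition $|M^*|+|E_0|+|E_1|\ge\tfrac12(1-x)|M|$. If $|M_1|\ge x|M|$, every $v\in M_1$ is unstable and all its neighbours lie in $V\setminus M$ on $v$'s own side; tracing the in-neighbours of such $v$ into $U_0\setminus M$ and then examining \emph{their} neighbours shows $|E_1|+|F_0|+|U_1|\ge\tfrac12(1-x)|M|$. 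Either way $\alpha+\beta\ge\tfrac12(1-x)=\tfrac{3d+1}{2d^2+8d+2}\sim\tfrac{3}{2d}$, and plugging this $y$ into Claim~\ref{cla:f_inc} gives exactly $\tfrac{2}{d+1/d-3/d^2+O(d^{-3})}$. The essential idea you are missing is that the case split must be \emph{internal to $M$} (structure of its vertices) rather than on $|M|$ versus $n$, so that the resulting bonuses are automatically of the form $c|M|$ and feed directly into Claim~\ref{cla:formula}.
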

\begin{proof}
  We use the notation defined in this section. Let $M_1$ be the set of vertices of $M\setminus M^*$ (i.e.\ the subset of vertices of $M$ of deficit 1 in absolute value) that are not incident to any edge of $E_0$ or $E_1$. In particular all the vertices of $M_1$ are unstable, and their in-degrees and out-degrees are $\tfrac{d-1}2$ or $\tfrac{d+1}2$.

  \smallskip

  Assume first that $|M_1|\ge x \cdot |M|$, with $x=\tfrac{d^2+d}{d^2+4d+1}$. Note that for every $v\in M_1$, all the neighbors of $v$ are in $V\setminus M$, on the same side of the cut as $v$ (with respect to $\cut_0$). Since $v\in M_1$ is unstable and not incident to any edge of $E_1$, all the in-neighbors of $v$ are unstable as well. It follows that $|U_0\setminus M|\ge \tfrac{d-1}{2d}|M_1|$. Let $W$ be the subset of vertices $w\in U_0\setminus M$ that have a stable out-neighbor in $M$ (if $w\in V^+$) or a stable in-neighbor in $M$ (if $w\in V^-$). Since stable vertices of $M$ are in $M\setminus M_1$, we have $|W|\le \tfrac{d-1}2(|M|-|M_1|)$, and thus
$$
\begin{array}{rcl}
  |U_0\setminus M|-|W| & \ge &\tfrac{d-1}{2d}|M_1| -\tfrac{d-1}{2}|M|+\tfrac{d-1}{2}|M_1|\\
        &   \ge& \tfrac{d^2-1}{2d}|M_1| -\tfrac{d-1}{2}|M|\\
                       &\ge & \tfrac1{2d}(x(d^2-1)-d(d-1)) |M|\\
  & = &(1-x)|M|
\end{array}$$
by definition of $x$. Note that each vertex $u\in (U_0\setminus M)\setminus W$ is unstable, so all its neighbors are on the same side as $u$. Consider by symmetry the case $u\in V^+$. By definition, $u$ has no stable out-neighbor in $M$. If $u$ has a stable in-neighbor $v$, then the edge $uv$ is in $E_1$. If $u$ has a neighbor $v \not\in M$, then the edge $uv$ is in $F_0$. If none of these cases occur, then all neighbors of $u$ are in $M$, and are unstable. It follows that $u$ is in $U_1$, the set of vertices that are still unstable after 1 flip. It follows that
\begin{equation}   |E_1|+|F_0|+|U_1|\ge \tfrac12( |U_0\setminus M|-|W|)\ge \tfrac12(1-x) |M|.
\end{equation}

          Using inequalities (\ref{eq2q}) and (\ref{eq3ter}) together with Claims~\ref{cla:formula} and \ref{cla:f_inc}, we obtain that $\cut_2/\opt\ge f_d(\alpha,\beta)$ with $\alpha,\beta\ge 0$ and $\alpha+\beta\ge \tfrac12(1-x)$.

\medskip

Assume now that $|M_1|< x \cdot |M|$. In this case, it means that at least $(1-x)\cdot |M|$ vertices $v$ are in $M^*$ (i.e.\ have deficit at least 3 in absolute value) or are incident to an edge of $E_0$ or $E_1$. It follows that
\begin{equation}
  |M^*|+|E_0|+|E_1|\ge \tfrac12(1-x)|M|,
  \end{equation} and thus $\cut_2/\opt\ge f_d(\alpha,\beta)$ with $\alpha,\beta\ge 0$ and $\alpha+\beta\ge \tfrac12(1-x)$. 

\medskip

In both cases we obtain $\cut_2/\opt \ge f_d(\alpha,\beta)$ with $\alpha,\beta\ge 0$ and $\alpha+\beta\ge \tfrac12(1-x)=\tfrac{3d+1}{2d^2+8d+2}$. It follows from Claim \ref{cla:f_inc}, that $\cut_2/\opt\ge\tfrac2{d+1/d-3/d^2+O(d^{-3})}$.
\end{proof}

Theorem~\ref{thm:mediand_flip} proves that after 2 flips, we can slightly improve on the approximation ratio of Theorem~\ref{thm:mediand}. A natural question is whether the same can be achieved after a single flip. The construction of Figure~\ref{fig:abcdf}, which is a refinement of the construction of Figure~\ref{fig:abcd}, shows that it is not the case: if we apply the oriented median algorithm and then perform a single flip, the size of the cut does not change (it remains $\tfrac2{d+1/d}\,\opt$).

\begin{figure}[htb]
\centering
\includegraphics[scale=0.8]{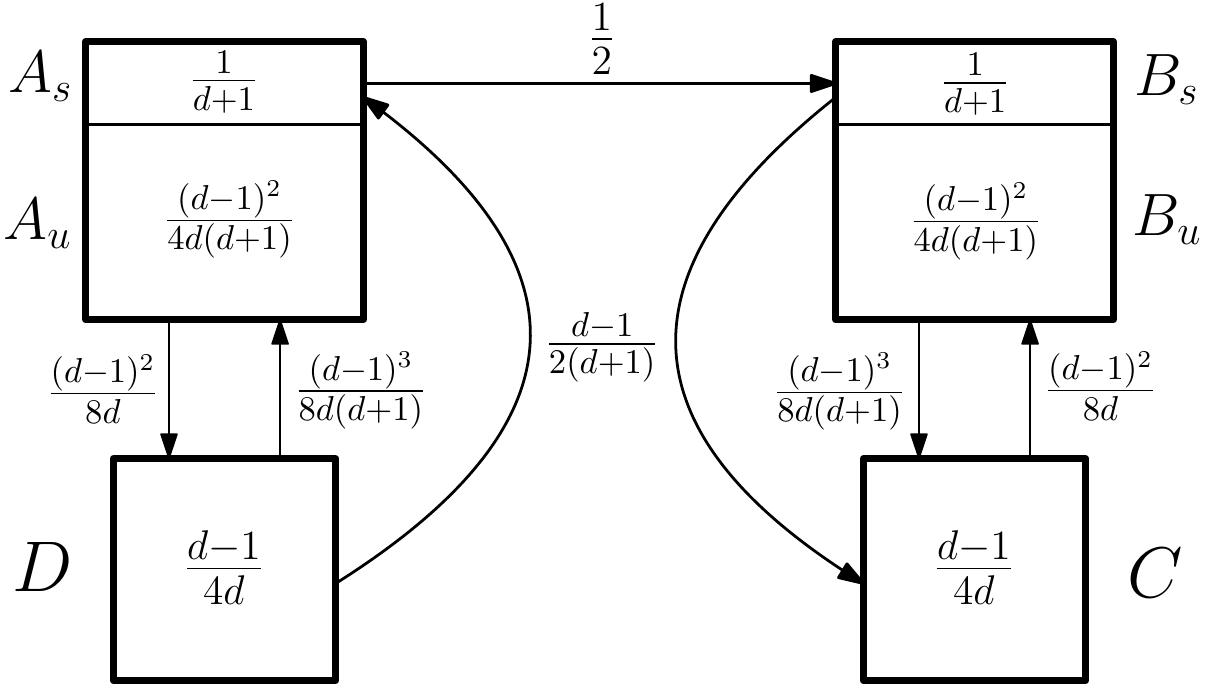}
\caption{A refinement of the construction of Figure~\ref{fig:abcd}. The subscripts $u$ and $s$ stand for \emph{unstable} and \emph{stable}, respectively.}
\label{fig:abcdf}
\end{figure} 

\section{Conclusion}

\subsection{FLIP}

In Section~\ref{sec:low}, we have designed a very simple one-round algorithm approximating {\sc MaxCut} in regular graphs (with odd degrees). Once a solution has been obtained, it might be tempting to run a few more rounds of computation to see if the solution can be improved locally.

We have already seen a simple way to improve the quality of a solution (by moving the so-called unstable vertices to the other side of the cut), but the notion of stability we used was specifically designed to improve the approximation ratio in a small number of rounds. Another simple way to locally improve a cut (in the sequential setting this time) is to take a vertex with  more neighbors in its own part than in the other part, and change its side. If this is done until no such vertex exists, the resulting cut is \emph{maximal}, and in this case is a $\tfrac12$-approximation of the maximum cut. This operation, called \emph{FLIP}, has been studied for a long time. When the edges are weighted, it was proved by Poljak~\cite{Pol95} that any sequence of FLIPs takes only polynomially many steps before reaching a maximal cut in cubic graphs, while  Monien and Tscheuschner~\cite{MT10} proved that there are graphs of maximum degree 4 for which a sequence of FLIPs can take exponentially many steps to reach a maximal cut. In the unweighted case however, since each flip improves the cut by at least one, the maximum number of flips before reaching a maximal cut is bounded by the number edges (which is linear in $n$ in bounded degree graphs). In the distributed framework, it might be tempting to consider running some rounds of the \emph{distributed FLIP} dynamics: at each round, each vertex with more neighbors in its own part than in the other part changes side. The graph $D^d_{2n}$ constructed in the previous sections shows that it might not be helpful at all: if all the vertices of the outer cycle are in one side of the cut, and all the vertices of the inner cycle are on the other side of the cut, then at each round, all the vertices of the graph would change side, not improving the solution.

It might be worth noting that in our application of the median algorithm, not all vertices of the outer cycle of $D^d_{2n}$ are on the same side of the cut (given the bad labelling of Figure~\ref{fig:ex}): due to some side-effects, roughly $d$ vertices in the outer cycle are not on the same side of the cut as the others, and similarly for the inner cycle. It can then be checked that if we run the distributed FLIP dynamics in this instance, the solution does improve over time, but improving the approximation ratio from $\tfrac1{d}$ to $\tfrac1{d}+\epsilon$ requires $\Omega(\epsilon n)$ rounds, which is extremely unpractical. This has to be compared with the lower bound of Theorem~\ref{thm:lb}, which says that in order to achieve an approximation ratio of $\tfrac1{d}+\epsilon$ in general, one needs a number of rounds of the order of $\Omega(\log^*n)$.

\subsection{Maximal cut}

An interesting aspect of the Maximal cut problem defined in the previous subsection is that it is an LCL problem (of locality 1): each vertex only needs to check that at least half of its neighbors lie on the other side of the cut. This is in stark contrast with  {\sc MaxCut}, as we have seen already. It was recently proved by Balliu,  Hirvonen, Lenzen, Olivetti, and Suomela~\cite{BHLOS19} that any deterministic algorithm finding a maximal cut in $d$-regular graphs ($d\ge 3$) takes $\Omega(\log n)$ rounds and any randomized algorithm takes $\Omega(\log \log n)$ rounds in the \textsf{LOCAL} model. It would be interesting to find algorithms matching these round complexities. Note that if we merely require that each vertex has at least $d/2-O(\sqrt{d})$ neighbors on the other side of the cut, the problem can be easily reduced to the distributed Lov\'asz Local Lemma, and therefore solved efficiently.

\begin{acknowledgement}
We would like to thank J\'er\'emie Chalopin and Keren Censor-Hillel for their remarks on the complexity of finding an orientation using very small messages in the \textsf{CONGEST} model. We also thank Michal Dory for calling reference~\cite{CHW08} to our attention, and David Gamarnik for pointing out references~\cite{CGHV15,DMS17,GL18,L17} to us.
\end{acknowledgement}

\end{document}